\documentclass[12pt,reqno]{amsproc}
\usepackage{amsthm,wrapfig,amsmath,amssymb,amsfonts,setspace,verbatim,graphicx,mathtools,mathrsfs,commath,float,mdframed,frame,xcolor,qtree, ulem,afterpage,colortbl}
\usepackage[hidelinks]{hyperref}

\usepackage[shortlabels]{enumitem}

\usepackage[T1]{fontenc}

\DeclareMathOperator{\dom}{dom}

\DeclareMathOperator{\diam}{diam}

\usepackage{newunicodechar}
\usepackage{lmodern}
\usepackage[a4paper,margin=1.1in]{geometry}
\setstretch{1.04}
\newtheorem{ut}{Theorem}

\newtheorem{up}{Proposition}

\newtheorem{obs}{Observation}

\newtheorem{uc}[ut]{Corollary}
\newtheorem{ucl}{Claim}

\theoremstyle{remark}

\definecolor{reddy}{HTML}{EA1174}
\definecolor{greenie}{HTML}{73F69C}


\theoremstyle{definition}

\newtheorem{ur}{Remark}

\begin{document}

\title[Exponential iteration and Borel sets]{Exponential iteration and Borel sets}

\subjclass[2020]{54F45, 54H05, 54E52, 30D05, 37F10} 
\keywords{complex exponential, Julia set, escaping set, Borel set}
\address{Department of Mathematics and Data Science, College of Coastal Georgia, Brunswick GA 31520, United States of America}
\email{dsl0003@auburn.edu,dlipham@ccga.edu}
\author[D.S. Lipham]{David S. Lipham}

\begin{abstract}We determine the exact Borel class of  escaping sets  in the exponential family $\exp(z)+a$. We also prove that the sets of non-escaping Julia points for many of these functions are topologically equivalent. \end{abstract}

\maketitle

\section{Introduction}

The escaping set $$I(f)=\{z\in \mathbb C:f^n(z)\to\infty\}$$ is one of the most studied objects in complex dynamics  \cite{ere0,ere,sch,rem2}. For any entire function $f$, it is easily seen to be an $F_{\sigma\delta}$-subset of the complex plane $\mathbb C$.  However, it was recently shown by Lasse Rempe  \cite{rem3} that if $f$ is transcendental (with an essential singularity at $\infty$), then $I(f)$ is not $F_{\sigma}$. Here we will strengthen this result for  functions in the exponential class $$f_a(z)=e^z+a\;;  \hspace{1em}a\in \mathbb C.$$ The main result of the paper is the following.

\begin{ut}\label{t1} $I(f_a)$ is not $G_{\delta\sigma}$ for any $a\in \mathbb C$. \end{ut}

{    In other words, $I(f_a)$ cannot be written as a countable union of $G_{\delta}$-subsets of $\mathbb C$.    Theorem 1 is new even for the plain exponential $f_0(z)=e^z$, and  completes the Borel classification of $I(f_a)$.}

\begin{figure}[h] 
\begin{tabular}{| c | c  | c  | c}\hline
   \cellcolor{gray!20}$G_\delta$ \cite[Theorem 4.1]{lip}& \cellcolor{gray!20}$G_{\delta\sigma}$ (Theorem 1)& \cellcolor{greenie!45}$G_{\delta\sigma\delta}$ & \cellcolor{greenie!45}$\ldots$  \\ \hline
   \cellcolor{gray!20}$F_{\sigma}$ \cite[Theorem 1.2]{rem3} & \cellcolor{greenie!45}$F_{\sigma\delta}$ \cite[Section 1]{rem3} & \cellcolor{greenie!45}$F_{\sigma\delta\sigma}$ & \cellcolor{greenie!45}$\ldots$ \\ \hline
  \end{tabular}
 \caption{Borel classes of $I(f_a)$ (in green only).}
\end{figure}

{    A simple consequence of Theorem \ref{t1} is that given any $G_{\delta}$-set of escaping points, there exists an escaping point whose orbit does not enter the set. \begin{uc}If $X$ is a $G_{\delta}$-subset of $\mathbb C$ that is contained in  $I(f_a)$, then the pre-images $f_a^{-n}[X]$ do not cover $I(f_a)$. Thus there exists $z\in I(f_a)$ such that $f^n_a(z)\notin X$ for all $n=0,1,2,3\ldots$.\end{uc}}

Next we focus on the  $f_a$'s  which have attracting or parabolic cycles. The parameters $a$ associated with this class form a large and conjecturally dense subset of $\mathbb C$. The conjugacy in \cite{rem2} established that all  escaping sets are mutually homeomorphic in this context. Using a localized version of Theorem \ref{t1}, we will prove the following complementary result wherein  $J(f_a)$ denotes the Julia set of $f_a$.

\begin{ut}\label{t2}If $f_a$ and $f_b$ have attracting or parabolic cycles, then $J(f_a)\setminus I(f_a)$  and $J(f_b)\setminus I(f_b)$ are topologically equivalent (homeomorphic).\end{ut}

The homeomorphism in Theorem \ref{t2} is not necessarily induced by a homeomorphism of the escaping sets, as $J(f_a)$ and $J(f_b)$ are often non-homeomorphic. {  The result also cannot be extended to all $a\in \mathbb C$; the spaces in Theorem \ref{t2} are totally disconnected \cite[Corollary 10]{lip1}, but there are other parameters (e.g.\ postsingularly finite)  for which $J(f_a)\setminus I(f_a)$ contains  unbounded connected sets \cite[Section 2]{rem}. }

\subsection*{Ideas behind the proofs}We will prove Theorem \ref{t1} by constructing a stratification, or `tree', of first category $G_{\delta}$-subspaces of $I(f_{-1})$, such that every infinite branch of the tree has an accumulation point in $I(f_{-1})$. This idea comes from a classical proof that the infinite power of the rationals $\mathbb Q ^\omega$ is not $G_{\delta\sigma}$. The stratification sets in that proof are of the form $\{q_0\}\times\ldots \times\{q_n\} \times \mathbb Q \times \mathbb Q\times \ldots$, while ours will be defined with rates of escape in mind. Results from  \cite{rem2} will allow us to work inside a relatively simple topological model of $J(f_{-1})$, and to generalize from $I(f_{-1})$ to $I(f_a)$. For attracting and parabolic parameters  we will in fact see that $I(f_a)$ is  nowhere $G_{\delta\sigma}$, and $J(f_a)\setminus I(f_a)$ nowhere $F_{\sigma\delta}$. This latter is one of the conditions in a uniqueness theorem of van Engelen \cite{vee}. The other conditions involve Baire category and topological dimension, both of which are known for $J(f_a)\setminus I(f_a)$ \cite{bak,lip1}. In this way, Theorem \ref{t2} will follow from van Engelen's characterization.

\section{Preliminaries}

\subsection{Dynamics of entire functions}

For each positive integer $n$, the $n$-fold composition of an entire function $f$ is denoted $f^n$. The \textit{orbit} of a point $z\in \mathbb C$ is the sequence of iterates $(f^n(z))^\infty_{  {n=0}}$. 

A point $z$ belongs to the \textit{escaping set} $I(f)$ if $f^n(z)\to \infty$, that is, if the orbit of $z$ converges to the point at infinity on the Riemann sphere. 

The \textit{Julia set} $J(f)$ is the set of non-normality for the family of iterates of $f$. For $f_a$ this roughly means that  $z\in J(f_a)$ if every neighborhood of $z$ contains points whose orbits are very different from one-another; for instance,  points with periodic orbits and points  whose orbits go to $\infty$.  In context of  exponential functions  it is well-known that the Julia set is equal to the closure of the escaping set;  $J(f_a)=\overline{I(f_a)}$ \cite[Corollary 1]{ere}. 

\subsection{Attracting and parabolic parameters}The function $f_a$ has an attracting (or parabolic) cycle if there exists $z\in \mathbb C$ such that the orbit $(f_a^n(z))_{{  n=0}}^\infty$ is periodic and $|f_a'(z)|<1$ (or $|f_a'(z)|=1$). The number  $a$ is then called an attracting (parabolic) parameter. For example, $a=-2$ is attracting and  $a=-1$ is  parabolic.

The Julia set of $f_{-1}$ is a \textit{Cantor bouquet} of uncountably many disjoint rays (homeomorphic images of $[0,\infty)$); see \cite{dev,aa}. Each ray belongs to $I(f_{-1})$ with the possible exception of its finite endpoint  \cite[Theorem 4.2]{sch}.\footnote{    In many of the papers cited here, the functions $f_a$ with $a\in (-\infty,-1]$ are represented in the form of $\lambda e^z$ with $\lambda\in (0,\frac{1}{e}]$. Note that $f_a$ is conjugate to $e^a e^z$ via the translation $z\mapsto z-a$.}

More generally, if $a$ is any attracting or parabolic parameter then $I(f_{a})$ is a disjoint union of rays and curves (homeomorphic images of $(0,\infty)$). Different curves may terminate at the same point of $\mathbb C$. In the event that they do, the point at which they terminate is non-escaping and thus belongs to $J(f_a)\setminus I(f_a)$; see Figure 2.   This \textit{pinched Cantor bouquet} phenomenon was proved explicitly for attracting parameters in \cite{rem2}, and is explained further in  \cite[Section 3]{lip1}. Generalizations to larger classes of transcendental entire functions appear in \cite{alh}.      

\begin{figure}
\includegraphics[scale=0.6,trim={5cm 0 0 0},clip]{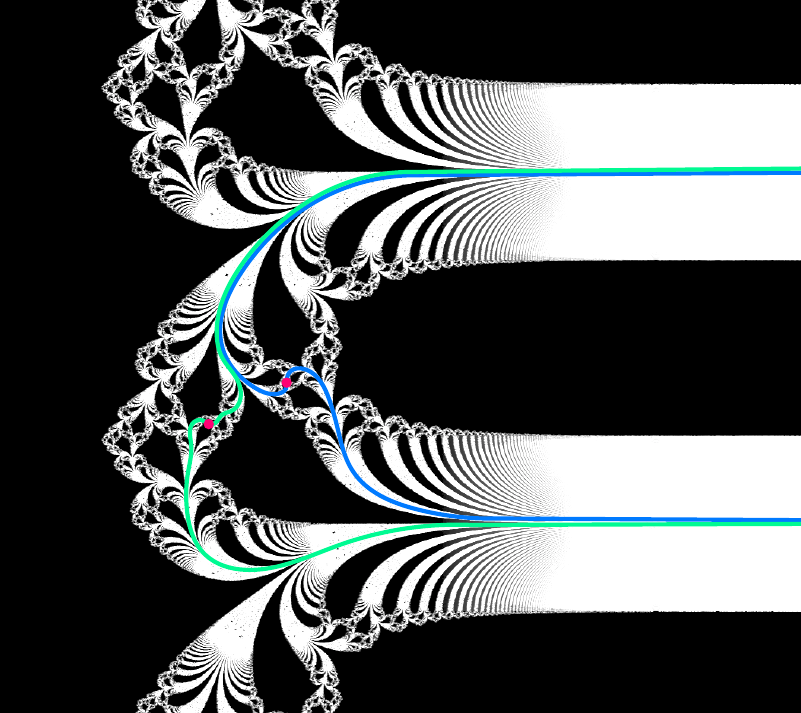}
\caption{Disjoint curves in $I(f_{2+\frac{\pi}{2}i})$ which terminate at non-escaping points.  See also \cite[Figure 1]{rem2}.}
\end{figure}

\subsection{Borel sets}All spaces under consideration are assumed to be separable and metrizable. A subset $X$ of a  space $Y$ is said to be an
\begin{itemize}
\item  $F_{\sigma}$-subset of $Y$ if $X$ is a countable union of closed subsets of $Y$
\item $G_{\delta}$-subset of $Y$ if $X$ is a countable intersection of open subsets of $Y$


\item $F_{\sigma\delta}$-subset of $Y$  if $X$ is a countable intersection of $F_{\sigma}$-subsets of $Y$
\item $G_{\delta\sigma}$-subset of $Y$  if $X$ is a countable union of $G_{\delta}$-subsets of $Y$.
\end{itemize}
Recall that in metric spaces, $G_{\delta}$-subsets include all closed subsets.

If  $Y$ is locally compact and $X$ is an $F_{\sigma}$-subset of $Y$, then $X$ is $\sigma$-compact. And then $X$ is an $F_{\sigma}$-subset of every  space into which it is embedded. In this event  $X$ is called an   absolute $F_{\sigma}$-set. 

If $Y$ is completely metrizable and $X$ is a $G_{\delta}$ (respectively, $F_{\sigma\delta}$ or $G_{\delta\sigma}$) subset of $Y$, then $X$ is a $G_{\delta}$  ($F_{\sigma\delta}$ or $G_{\delta\sigma}$) subset of every space $Z$ into which it is embedded.\footnote{    This is a consequence of  Lavrentiev's theorem \cite[Theorem 3.9]{kec} and the fact that $Z$ has a metric completion. See also \cite[p.432 Corollary 1 and Remark 1]{kur}} Now $X$ is called  an   absolute $G_{\delta}$-set (and similarly for $F_{\sigma\delta}$ and $G_{\delta\sigma}$). 

The absolute Borel properties described above are intrinsic to the space $X$ and are preserved by homeomorphisms.  Indeed, $X$ is an   absolute     :
\begin{itemize}
\item $F_{\sigma}$-set $\Leftrightarrow$ $X$ is  $\sigma$-compact
\item $G_{\delta}$-set $\Leftrightarrow$ $X$ is completely metrizable \cite[Theorem 3.11]{kec}
\item $F_{\sigma\delta}$-set $\Leftrightarrow$ $X$ has a Sierpi\'{n}ski stratification \cite{sie}
\item $G_{\delta\sigma}$-set $\Leftrightarrow$ $X$ is $\sigma$-complete (i.e.\ $X$ is a countable union of completely metrizable subspaces).
\end{itemize}

We say that a space $X$ is \textit{nowhere $G_{\delta\sigma}$} if {  every absolute $G_{\delta\sigma}$-subset of $X$ has empty interior}; likewise  for $F_{\sigma\delta}$.

\subsection{Baire category}A Borel set $X$ is \textit{first category} if $X$ can be written as a countable union of nowhere dense subsets, and \textit{Baire} if $X$ contains a dense completely metrizable subspace \cite[Theorem 1.12.2]{vee}. 

The first category property is inherited by open subspaces, and first category spaces are not Baire \cite[Theorem 8.4]{kec}. 

\subsection{Lower semi-continuity}A function $\varphi:X\to [0,\infty)$ is \textit{lower semi-continuous} if $\varphi(x_n)\to \varphi(x)$ whenever $x_n\to x$ and $\varphi(x_n)\leq \varphi(x)$ for all $n$. This is equivalent to $\varphi^{-1}(r,\infty)$ being open in $X$ for every $r\geq 0$.

\section{A topological model of $\exp-1$}

We will prove Theorem \ref{t1} first for the function $f=f_{-1}$ using a dynamical system $(J(\mathcal F),\mathcal F)$ which models $(J(f),f\restriction J(f))$.  The model was introduced in \cite{rem2} and was used extensively  in  \cite{rem}.

\subsection{The model}
Let $\mathbb Z ^\omega$ denote the space of  integer sequences $$\uline{s}=s_0s_1s_2\ldots.$$  Define 
$\mathcal F:[0,\infty)\times \mathbb Z ^\omega\to \mathbb R\times \mathbb Z ^\omega$ by $$\langle t,\uline{s}\rangle\mapsto \langle F(t)-|s_1|,\sigma(\uline{s})\rangle,$$ where $F(t)=e^t-1$ and $\sigma(s_0s_1s_2\ldots)=s_1s_2s_3\ldots$ is the shift  on $\mathbb Z ^\omega$.    
For each $x=\langle t,\uline{s}\rangle\in [0,\infty)\times \mathbb Z ^\omega$ put $T(x)=t$ and $\uline s(x)=\uline{s}$. Let  \begin{align*}
J(\mathcal F)&=\{x\in [0,\infty)\times \mathbb Z ^\omega:T(\mathcal F^n(x))\geq 0\text{ for all }n\geq 0\}\text{; and}\\  
 I(\mathcal F)&=\{x\in J(\mathcal F):T(\mathcal F^n(x))\to\infty\}.\end{align*}

\begin{ur}$J(\mathcal F)$  is closed in $[0,\infty)\times \mathbb Z ^\omega$ by continuity of each $T\circ \mathcal F^n$. \end{ur}

\begin{ur} In \cite[Section 9]{rem2} it was shown that $\mathcal F\restriction J(\mathcal F)$ is topologically conjugate to $f\restriction J(f)$, meaning that  there is a homeomorphism $\varphi:J(\mathcal F)\to J(f)$ such that $\varphi\circ \mathcal F\restriction J(\mathcal F)=f\circ \varphi.$   Technically, the conjugacy in  \cite{rem2} was constructed only in the case that  $f$ has an attracting cycle, whereas the fixed point of $f_{-1}$ is parabolic. However, $f_{-1}$ is conjugate on its Julia set to other exponentials with attracting fixed points, such as $f_{-2}$, due to more recent results in \cite{alh}.  See \cite[Example 11.1]{alh}.        \end{ur}

\subsection{Endpoints of $J(\mathcal F)$} 

For each $\uline s\in \mathbb Z^\omega$ put $$t_{\uline{s}}=\min\{t\geq 0:\langle t,\uline{s}\rangle\in J(\mathcal F)\},$$  or $t_{\uline{s}}=\infty$ if there is no such $t$. Observe that $$J(\mathcal F)=\bigcup_{\uline s\in \mathbb Z ^\omega} [t_{\uline s},\infty)\times \{\uline s\},$$ 
and 
 thus $E(\mathcal F)=\{\langle t_{\uline{s}},\uline{s}\rangle:t_{\uline{s}}<\infty\}$ consists of the (finite) endpoints of $J(\mathcal F)$. 

\begin{ur}In light of Remark 1 and the representation of $J(\mathcal F)$ above,  $$J(\mathcal F)\setminus E(\mathcal F)=\bigcup _{n=1}^\infty J(\mathcal F)+\langle 1/n,000\ldots \rangle$$ is an $F_{\sigma}$-subset of $J(\mathcal F)$. Hence $E(\mathcal F)$ is a $G_{\delta}$-subset of $J(\mathcal F)$  (and of the completely metrizable space $[0,\infty)\times \mathbb Z ^\omega$). Therefore $E(\mathcal F)$ is completely metrizable. \end{ur}

\begin{ur} The map $\uline s\mapsto t_{\uline s}$ is lower semi-continuous  \cite[Observation 3.1]{rem2}.\end{ur}

\begin{ur}{  The set of endpoints is completely invariant under the mapping $\mathcal F$.}  Moreover, for every $n\geq 0$ and $\langle t_{\uline{s}},\uline{s}\rangle\in E(\mathcal F)$ we have $$\mathcal F^n(\langle t_{\uline{s}},\uline{s}\rangle)=\langle t_{\sigma^n(\uline{s})},\sigma^n(\uline{s})\rangle\in E(\mathcal F).$$ In particular,  if $x\in E(\mathcal F)$ then $T(\mathcal F^n(x))=t_{\sigma^n(\uline s(x))}$.\end{ur}

\begin{ur}By Montel's theorem and the conjugacy between $\mathcal F\restriction J(\mathcal F)$ and $f\restriction J(f)$,  the completely $\mathcal F$-invariant sets $E(\mathcal F)$, $I(\mathcal F)$, and $E(\mathcal F)\cap I(\mathcal F)$ are each dense in $J(\mathcal F)$. \end{ur}

\subsection{Estimates of $t_{\underline s}$} 
 Let   $F^{-1}$ denote the inverse of $F$. So $F^{-1}(t)=\ln(t+1)$.  The $k$-fold composition of $F^{-1}$  will be denoted $F^{-k}$. The following can be verified with elementary calculus. 

\begin{up}
$F^{-k}(t-1)>F^{-k}(t)-1$ for all $k\geq 1$ and $t\in [1,\infty)$.\end{up}
 
\noindent Now for each $\uline s\in \mathbb Z ^\omega$ define $$t^*_{\uline s}=\sup_{k\geq 1}F^{-k}|s_{k}|.$$ 

\noindent The next proposition  comes from \cite[Lemma 3.8]{rem} and \cite[Observation 3.7]{rem}.

\begin{up}\label{p2}\
\begin{enumerate}[label=\textnormal{(\alph*)}]
\item $t^*_{\uline s}\leq t_{\uline s}\leq t^*_{\uline s}+1$,
\item  $\langle t_{\uline s},\uline s\rangle \in E(\mathcal F)\cap I(\mathcal F)$ if and only if  $t^*_{\uline s}<\infty$  and $t^*_{\sigma^n(\uline s)}\to\infty$, and
\item  if  $|s^0_n|\leq |s_n|$ for all $n<\omega$, then $ t_{\uline s^0}\leq t_{\uline s} $ (and likewise for $t^*$).
\end{enumerate}
\end{up}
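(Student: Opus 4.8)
The plan is to work entirely with the real orbit. For $x=\langle t,\underline s\rangle$ write $a_n=T(\mathcal F^n(x))$, so that $a_0=t$ and $a_{n+1}=F(a_n)-|s_{n+1}|$, and membership in $J(\mathcal F)$ is exactly the condition $a_n\geq 0$ for all $n$. For the lower bound in (a) I fix $k\geq 1$ and run a backward induction: from $a_k\geq 0$ we get $F(a_{k-1})=a_k+|s_k|\geq|s_k|$, hence $a_{k-1}\geq F^{-1}(|s_k|)$; in general $F(a_{j-1})=a_j+|s_j|\geq a_j$ gives $a_{j-1}\geq F^{-1}(a_j)$, so each lower bound on $a_j$ transfers to $a_{j-1}$ after one application of the increasing map $F^{-1}$. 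Iterating down to $a_0$ yields $t\geq F^{-k}(|s_k|)$, and taking the supremum over $k$ gives $t\geq t^*_{\underline s}$ for every point of $J(\mathcal F)$ on the ray $\underline s$, in particular $t_{\underline s}\geq t^*_{\underline s}$.

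The engine for the remaining bounds is the self-similar identity $F(t^*_{\underline s})=\max\{|s_1|,t^*_{\sigma(\underline s)}\}$, obtained by pushing the increasing continuous map $F$ through the supremum defining $t^*_{\underline s}$ and splitting off the $k=1$ term; it also shows that $t^*_{\underline s}<\infty$ forces $t^*_{\sigma^n(\underline s)}<\infty$ for all $n$. For the upper bound in (a) I prove, by forward induction, that starting from $a_0=t^*_{\underline s}+1$ the stronger invariant $a_n\geq t^*_{\sigma^n(\underline s)}+1$ persists. Applying $F$ to the hypothesis and using $F(u+1)=e\,F(u)+(e-1)$ together with $e>2$ and $\max\{A,B\}\geq(A+B)/2$, one gets $F(a_n)\geq e\,F(t^*_{\sigma^n(\underline s)})+(e-1)\geq |s_{n+1}|+t^*_{\sigma^{n+1}(\underline s)}+1$, whence $a_{n+1}=F(a_n)-|s_{n+1}|\geq t^*_{\sigma^{n+1}(\underline s)}+1$. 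In particular every $a_n\geq 1>0$, so $\langle t^*_{\underline s}+1,\underline s\rangle\in J(\mathcal F)$ and $t_{\underline s}\leq t^*_{\underline s}+1$.

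Part (c) for $t^*$ is immediate from monotonicity of each $F^{-k}$; for $t$ I compare the two orbits launched from the common height $t=t_{\underline s}$, since $|s^0_n|\leq|s_n|$ and an easy induction give $a^0_n\geq a_n\geq 0$, so $\langle t_{\underline s},\underline s^0\rangle\in J(\mathcal F)$ and $t_{\underline s^0}\leq t_{\underline s}$. For (b), note first that a finite endpoint on the ray $\underline s$ exists precisely when $t^*_{\underline s}<\infty$, by (a). The crux is that $\mathcal F$ carries endpoints to endpoints: for $x=\langle t_{\underline s},\underline s\rangle$ I claim $a_n=t_{\sigma^n(\underline s)}$. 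I prove $a_1=t_{\sigma(\underline s)}$ by minimality — if $a_1>t_{\sigma(\underline s)}$ then the preimage height $F^{-1}(t_{\sigma(\underline s)}+|s_1|)<t_{\underline s}$ still yields a point of $J(\mathcal F)$ on the ray $\underline s$, contradicting minimality of $t_{\underline s}$ — and iterate. Consequently $x\in\widetilde E(\mathcal F)$, i.e.\ $x$ is a finite endpoint with $a_n\to\infty$, exactly when $t^*_{\underline s}<\infty$ and $t_{\sigma^n(\underline s)}\to\infty$, the latter equivalent to $t^*_{\sigma^n(\underline s)}\to\infty$ by the squeeze $t^*\leq t\leq t^*+1$ of (a).

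The only genuinely delicate point is the upper estimate in (a): the naive invariant $a_n\geq t^*_{\sigma^n(\underline s)}$ is \emph{not} preserved by the recursion, so one must introduce the additive slack and check that the constant $1$ — equivalently, the cushion supplied by $e>2$ — suffices; this is exactly where the elementary inequalities of Proposition~1 are available as an alternative route. Everything else reduces to monotonicity of $F$ and $F^{-1}$ and to the self-similar identity for $t^*$.
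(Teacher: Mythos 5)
Your proof is correct, but note that it is not a reworking of an argument the paper actually gives: the paper does not prove this proposition at all, importing part (a) and part (b) from Alhabib and Rempe-Gillen \cite{rem} (Lemma 3.8 and Observation 3.7 there), with only the elementary inequalities of Proposition 1 recorded locally for later use. What you supply is a self-contained derivation from the definition of the model, and its steps check out: the backward induction $a_{j-1}\geq F^{-1}(a_j)$ yields $t\geq F^{-k}|s_k|$ and hence the lower bound in (a); the self-similar identity $F(t^*_{\uline s})=\max\{|s_1|,t^*_{\sigma(\uline s)}\}$ is a correct consequence of monotonicity and continuity of $F$ (and correctly handles finiteness of all shifts); and the strengthened invariant $a_n\geq t^*_{\sigma^n(\uline s)}+1$ does propagate, since $F(u+1)=eF(u)+(e-1)$, $e>2$ and $2\max\{A,B\}\geq A+B$ give $a_{n+1}\geq t^*_{\sigma^{n+1}(\uline s)}+1$; your observation that the additive slack is genuinely needed (the invariant without ``$+1$'' fails) is also accurate. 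Your minimality/pull-back argument that $\mathcal F$ sends the endpoint of the ray $\uline s$ to the endpoint of the ray $\sigma(\uline s)$, i.e.\ $T(\mathcal F^n(\langle t_{\uline s},\uline s\rangle))=t_{\sigma^n(\uline s)}$, is exactly the fact that justifies the paper's identification $\widetilde E(\mathcal F)=\{x\in E(\mathcal F):t_{\sigma^n(\uline s(x))}\to\infty\}$, and combined with the squeeze in (a) it gives (b). One caveat you share with the paper's own formulation: the ``if'' direction of (b) can only be meant for $x\in E(\mathcal F)$, since the right-hand side depends only on $\uline s(x)$ while non-endpoint points on the same ray exist; your proof implicitly restricts to endpoints, which is the intended reading. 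The trade-off between the two routes is clear: your argument makes the proposition verifiable inside the paper using nothing beyond the definition of $\mathcal F$, at the cost of roughly a page, whereas the citation keeps Section 3 short and defers to \cite{rem}, where essentially these estimates are carried out.
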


\noindent We will also need the following.

\begin{up}\label{p3}For any positive real number $R>0$ and integer $n\geq 0$,  $$\{x\in J(\mathcal F):t^*_{\sigma^n(\uline s(x))}>R\}$$ is open in $J(\mathcal F)$. Further, if $y\in E(\mathcal F)\cap \overline{\{x\in E(\mathcal F):t^*_{\sigma^n(\uline s(x))}>R\}}$ then 
\begin{enumerate}[label=\textnormal{(\alph*)}]
\item $t_{\sigma^n(\uline s(y))}\geq R$, and 
\item $t^*_{\sigma^n(\uline s(y))}\geq R-1$.
\end{enumerate}\end{up}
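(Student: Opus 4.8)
The plan is to prove the openness assertion first and then the two limit estimates, reducing the whole analysis to a single far-out coordinate. For openness, I would note that $t^*_{\sigma^n(\uline s(x))}=\sup_{k\geq 1}F^{-k}|s_{n+k}(x)|$ is a supremum of functions each continuous on $J(\mathcal F)$, since $x\mapsto s_{n+k}(x)$ is locally constant on $\mathbb Z^\omega$ and $F^{-k}$ is continuous. A supremum of continuous functions is lower semi-continuous, so $\{x\in J(\mathcal F):t^*_{\sigma^n(\uline s(x))}>R\}$ is open. Equivalently, since $F^{-k}|s_{n+k}(x)|>R$ holds iff $|s_{n+k}(x)|>F^k(R)$, the set in question equals $\bigcup_{k\geq 1}\{x\in J(\mathcal F):|s_{n+k}(x)|>F^k(R)\}$, a union of relatively clopen cylinders; this description is what drives the rest of the argument.

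Writing $U$ for this set, I would take $y\in\overline U$ and dispose of the easy case first: if $y\in U$ then $t^*_{\sigma^n(\uline s(y))}>R$, and Proposition~\ref{p2}(a) gives $t_{\sigma^n(\uline s(y))}\geq t^*_{\sigma^n(\uline s(y))}>R$, so both (a) and (b) hold. So suppose $y\notin U$, fix $y_m\to y$ in $U$, and for each $m$ choose a witness $k_m\geq 1$ with $|s_{n+k_m}(y_m)|>F^{k_m}(R)$. If the $k_m$ had a constant subsequence $k_m=k$, then $y$ would lie in the clopen cylinder $\{|s_{n+k}|>F^k(R)\}\subseteq U$, contradicting $y\notin U$; hence $k_m\to\infty$.

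Next I would propagate the single large subtraction back along the orbit. For $x\in J(\mathcal F)$ the recursion $T(\mathcal F^{i}(x))=F(T(\mathcal F^{i-1}(x)))-|s_i(x)|\geq 0$ gives $T(\mathcal F^{i-1}(x))\geq F^{-1}(T(\mathcal F^{i}(x)))$, and starting from $F(T(\mathcal F^{n+k_m-1}(y_m)))\geq|s_{n+k_m}(y_m)|>F^{k_m}(R)$ and iterating downward yields $T(\mathcal F^{n+j}(y_m))>F^j(R)$ for all $0\leq j\leq k_m-1$. Fixing $j$ and letting $m\to\infty$ (so that eventually $k_m-1\geq j$), continuity of $T\circ\mathcal F^{n+j}$ on $J(\mathcal F)$ then gives the uniform lower bound $T(\mathcal F^{n+j}(y))\geq F^j(R)$ for every $j\geq 0$.

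The final and hardest step is to convert these orbit-height bounds into the stated bounds on $t_{\sigma^n(\uline s(y))}$ and $t^*_{\sigma^n(\uline s(y))}$. For (a) I would argue that no point $\langle t,\sigma^n(\uline s(y))\rangle$ with $t<R$ can lie in $J(\mathcal F)$: by the monotonicity in Proposition~\ref{p2}(c) its orbit is dominated by that of $\mathcal F^n(y)$, and starting strictly below $R$ should force a negative iterate against $T(\mathcal F^{n+j}(y))\geq F^j(R)$, giving $t_{\sigma^n(\uline s(y))}\geq R$; for (b) the lost unit is exactly the slack in the estimate $F^{-k}(t-1)>F^{-k}(t)-1$, which I would use to trade the height bound for a lower bound on $t^*$. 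I expect the genuine obstacle to be precisely this transfer: because $k_m\to\infty$, the large coordinate witnessing $y_m\in U$ leaves every fixed coordinate window and is \emph{not} inherited by $y$, so the entire difficulty is to recover lower bounds on $t_{\sigma^n(\uline s(y))}$ and $t^*_{\sigma^n(\uline s(y))}$ from the propagated orbit data alone, controlling the tail coordinates of $y$ without losing more than the single unit recorded in (b).
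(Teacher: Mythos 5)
Your openness argument is correct and is essentially the paper's: the paper observes that $\uline s\mapsto t^*_{\uline s}$ is lower semi-continuous and pulls back along the continuous map $x\mapsto \sigma^n(\uline s(x))$, which is the same computation as your cylinder decomposition. Your propagation step is also correct: in the hard case $y\in\overline U\setminus U$, where the witnesses must satisfy $k_m\to\infty$, the downward induction does give $T(\mathcal F^{n+j}(y))\geq F^j(R)$ for every $j\geq 0$, and this is a careful, quantitative rendering of the step the paper compresses into the phrase ``by continuity of $T\circ\mathcal F^n$.''

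The transfer you could not complete is a genuine gap, however, and it is not merely hard: it cannot be closed from the orbit bounds alone, because conclusions (a) and (b) do not follow from them --- in fact they fail for general $y\in J(\mathcal F)$ in the closure. Take $n=0$ and $R>1$, and for each $m\geq 1$ let $\uline s^m\in\mathbb Z^\omega$ have $s^m_m=\lceil F^m(R+2)\rceil$ and all other entries $0$. Then $t_{\uline s^m}=t^*_{\uline s^m}=F^{-m}(s^m_m)\in[R+2,R+3]$, so every endpoint $x_m=\langle t_{\uline s^m},\uline s^m\rangle$ lies in $U$; but $\uline s^m\to\uline{0}$ coordinatewise, so after passing to a subsequence along which the heights converge we get $x_m\to y=\langle t^\infty,\uline{0}\rangle\in J(\mathcal F)$ with $t^\infty\geq R+2$. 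This $y$ lies in $\overline U$ and satisfies all of your orbit bounds, since $T(\mathcal F^j(y))=F^j(t^\infty)\geq F^j(R)$, yet $t_{\uline{0}}=t^*_{\uline{0}}=0$, so (a) and (b) both fail at $y$. The same point shows exactly why your Proposition~\ref{p2}(c) step is invalid: an orbit dominated from above by a fast-growing orbit need not ever go negative --- the point $\langle 0,\uline{0}\rangle$ sits at height $0$ forever --- so nothing is ``forced negative.'' What rescues the proposition is not orbit data but the endpoint structure: $\mathcal F$ maps endpoints to endpoints (if $F(t_{\uline s})-|s_1|>t_{\sigma(\uline s)}$, then $\langle F^{-1}(t_{\sigma(\uline s)}+|s_1|),\uline s\rangle$ would lie in $J(\mathcal F)$, contradicting minimality of $t_{\uline s}$), hence $T(\mathcal F^n(y))=t_{\sigma^n(\uline s(y))}$ whenever $y\in E(\mathcal F)$. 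Under the additional hypothesis $y\in E(\mathcal F)$, your bound $T(\mathcal F^n(y))\geq R$ \emph{is} statement (a), and (b) then follows from Proposition~\ref{p2}(a). Note that the $x_m$ above are themselves endpoints, so the restriction must be imposed on the limit $y$, not on the approximating sequence. This endpoint restriction is implicit in the paper's own one-line justification, and it is the only case in which the proposition is ever applied (the second claim in the proof of Theorem~3 and the proof of Theorem~5 both invoke it at points of $E(\mathcal F)$); as the example shows, the literal ``for every $y$ in the closure'' formulation that you set out to prove is false, so your plan to recover (a) and (b) from the propagated orbit data alone was bound to fail.
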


\begin{proof}Let $\mathbb S=\{\uline s\in \mathbb Z ^\omega:t_{\uline s}<\infty\}$. Note that $\mathbb S$ is equal to $\{\uline s\in \mathbb Z ^\omega:t^*_{\uline s}<\infty\}$ by Proposition \ref{p2}(a). The mapping $\uline s\mapsto t^*_{\uline s}$ is easily seen to be lower semi-continuous, so $\mathbb U=\{\uline s\in \mathbb S:t^*_{\uline s}>R\}$  is open in $\mathbb S$. Thus $\sigma^{-n} \mathbb U=\{\uline s\in \mathbb S:\sigma^n(\uline s)\in \mathbb U\}$ is open in $\mathbb S$. By continuity of the projection of $J(\mathcal F)$ onto $\mathbb S$, we conclude that $$\{x\in J(\mathcal F):\uline s(x)\in \sigma^{-n}\mathbb U\}=\{x\in J(\mathcal F):t^*_{\sigma^n(\uline s(x))}>R\}$$ is open in $J(\mathcal F)$. 

 By Remark 5,  Proposition \ref{p2}(a)  and continuity of $T\circ \mathcal F^n$ we have 
\begin{align*}E(\mathcal F)\cap\overline{\{x\in E(\mathcal F):t^*_{\sigma^n(\uline s(x))}>R\}}&\subset E(\mathcal F)\cap\overline{\{x\in E(\mathcal F):t_{\sigma^n(\uline s(x))}>R\}}\\
&=E(\mathcal F)\cap\overline{\{x\in E(\mathcal F):T(\mathcal F^n(x))>R\}}\\
&\subset \{x\in E(\mathcal F):T(\mathcal F^n(x))\geq R\}\\
&=\{x\in E(\mathcal F):t_{\sigma^n(\uline s(x))}\geq R\}\\
&\subset \{x\in E(\mathcal F):t^*_{\sigma^n(\uline s(x))}+1\geq R\},
\end{align*}which proves both  (a) and (b).\end{proof}


\section{Stratifying the escaping endpoints}

Let $\mathbb N ^{<\omega}$ denote the set of all finite functions $\alpha:n\to \mathbb N$, where $n<\omega$ and $${  \dom(\alpha)=n=\{0,\ldots,n-1\}  }.$$ We sometimes represent $\alpha$ as an $n$-tuple of integers $\langle N_0,N_1,\ldots, N_{\dom(\alpha)-1}\rangle$ where $N_i=\alpha(i)$ for each $i<\dom(\alpha)$.  Given $\alpha\in \mathbb N^{<\omega}$ and $N\in \mathbb N$, the notation $\alpha^\frown N$ will stand for the extension of $\alpha$ that has representation $\langle N_0,N_1,\ldots, N_{\dom(\alpha)-1},N\rangle.$ For example, if $\alpha=\langle 1,2,5\rangle$ then $\alpha^\frown 8=\langle 1,2,5,8\rangle$.

We recursively define a system $(X_\alpha)$ of subsets of  $$\widetilde{E}(\mathcal F)=E(\mathcal F)\cap I(\mathcal F)=\{x\in E(\mathcal F):t_{\sigma^n(\uline s(x))}\to\infty\}.$$  
 for increasing functions $\alpha\in \mathbb N ^{<\omega}$.  To begin, let $$X_{\varnothing}=\widetilde E(\mathcal F).$$
  For each $N\in \mathbb N$ define $$X_{\langle N\rangle}=\{x\in X_{\varnothing}:t^*_{\sigma^n(\uline s(x))}> 2\text{ for all }n\geq N\}.$$ If $\alpha=\langle N_0,N_1,\ldots {  ,} N_{\dom(\alpha)-1}\rangle\in \mathbb N ^{<\omega}$ is increasing, $X_\alpha$ has been defined, and  $N>N_{\dom(\alpha)-1}$, then define 
$$X_{\alpha^\frown N}=\{x\in X_\alpha:t^*_{\sigma^n(\uline s(x))}> 3\dom(\alpha)+2\text{ for all }n\geq N\}.$$

\begin{obs}Every $X_{\alpha}$ is a $G_{\delta}$-subset of $\widetilde E(\mathcal F)$.\end{obs}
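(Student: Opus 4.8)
The plan is to induct on $\dom(\alpha)$, the whole argument resting on Proposition \ref{p3}, which guarantees that each single-index condition $t^*_{\sigma^n(\uline s(x))}>R$ cuts out an open subset of $J(\mathcal F)$. First I would record that the two defining formulas are really one recursion: since $\dom(\varnothing)=0$ gives $3\dom(\varnothing)+2=2$, the set $X_{\langle N\rangle}$ is exactly $X_{\varnothing^\frown N}$, so it suffices to treat the uniform case $X_{\alpha^\frown N}=\{x\in X_\alpha:t^*_{\sigma^n(\uline s(x))}>R\text{ for all }n\geq N\}$ with $R=3\dom(\alpha)+2$, together with the base set $X_\varnothing$.

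For the base case, $X_\varnothing=\widetilde E(\mathcal F)$ is the whole ambient space and hence trivially a $G_\delta$-subset of itself. For the inductive step, assume $X_\alpha$ is $G_\delta$ in $\widetilde E(\mathcal F)$. By Proposition \ref{p3}, for each fixed $n\geq N$ the set $U_n=\{x\in J(\mathcal F):t^*_{\sigma^n(\uline s(x))}>R\}$ is open in $J(\mathcal F)$, and therefore its trace $U_n\cap\widetilde E(\mathcal F)$ is open in the subspace $\widetilde E(\mathcal F)$. Consequently $\bigcap_{n\geq N}\bigl(U_n\cap\widetilde E(\mathcal F)\bigr)$ is a countable intersection of relatively open sets, i.e.\ a $G_\delta$-subset of $\widetilde E(\mathcal F)$. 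Since $X_{\alpha^\frown N}$ is the intersection of this set with $X_\alpha$, and a finite (indeed countable) intersection of $G_\delta$-subsets is again $G_\delta$, the claim follows for $\alpha^\frown N$, completing the induction.

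The computation is routine once Proposition \ref{p3} is in hand; the only point demanding care is that the relevant Borel class is being measured relative to the subspace $\widetilde E(\mathcal F)$ rather than to $J(\mathcal F)$. Because $E(\mathcal F)$ is merely $G_\delta$ in $J(\mathcal F)$ and $I(\mathcal F)$ need not be $G_\delta$ there at all, one should not expect the $X_\alpha$ to be $G_\delta$ in $J(\mathcal F)$ itself. Working entirely within the subspace topology of $\widetilde E(\mathcal F)$ — where restrictions of $J(\mathcal F)$-open sets stay open and the two notions of $G_\delta$ interact correctly — is exactly what keeps the bookkeeping clean, so I anticipate no genuine obstacle beyond being consistent about the ambient space.
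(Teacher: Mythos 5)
Your proof is correct and follows the same route as the paper, whose entire proof of this observation is the single line ``This follows from Proposition \ref{p3}'': each condition $t^*_{\sigma^n(\uline s(x))}>R$ traces out a relatively open subset of $\widetilde E(\mathcal F)$ by that proposition, so each stage of the recursion intersects a $G_\delta$ with a countable intersection of open sets. Your write-up simply makes explicit the induction and the unification of the two defining clauses (via $3\dom(\varnothing)+2=2$) that the paper leaves implicit.
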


\begin{proof}This is an easy consequence of the first part of Proposition \ref{p3}. \end{proof}

\begin{obs}$$X_{\alpha}=\bigcup _{N=1}^\infty X_{\alpha^\frown N}$$ .\end{obs}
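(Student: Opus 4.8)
The plan is to prove the two inclusions separately, and only one of them requires any work. The inclusion $\bigcup_{N=1}^\infty X_{\alpha^\frown N}\subseteq X_\alpha$ is immediate: by the recursive definition, each $X_{\alpha^\frown N}$ is carved out of $X_\alpha$ by imposing the additional constraint $t^*_{\sigma^n(\uline s(x))}>3\dom(\alpha)+2$ for all $n\geq N$, so in particular $X_{\alpha^\frown N}\subseteq X_\alpha$ for every admissible $N$. Taking the union over $N$ preserves this containment.

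For the reverse inclusion $X_\alpha\subseteq \bigcup_{N=1}^\infty X_{\alpha^\frown N}$, I would fix an arbitrary $x\in X_\alpha$ and produce a single $N$ witnessing membership in some $X_{\alpha^\frown N}$. Since $X_\alpha\subseteq X_{\varnothing}=\widetilde E(\mathcal F)$, the point $x$ lies in the escaping-endpoint set, so by Proposition \ref{p2}(b) we have $t^*_{\sigma^n(\uline s(x))}\to\infty$ as $n\to\infty$. Applying the definition of divergence to the threshold $M=3\dom(\alpha)+2$, there exists an index $N$ such that $t^*_{\sigma^n(\uline s(x))}>M$ holds for every $n\geq N$. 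This is precisely the defining condition for $x\in X_{\alpha^\frown N}$, and since $x\in X_\alpha$ already, we conclude $x\in X_{\alpha^\frown N}\subseteq \bigcup_{N=1}^\infty X_{\alpha^\frown N}$.

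The only point demanding a moment of care — and what I expect to be the lone obstacle, such as it is — is the side condition that $\alpha^\frown N$ be increasing, i.e.\ that $N>N_{\dom(\alpha)-1}$, which is required for $X_{\alpha^\frown N}$ to be defined. This is harmless: because $t^*_{\sigma^n(\uline s(x))}\to\infty$, the set of indices $n$ with $t^*_{\sigma^n(\uline s(x))}>M$ is cofinite, so I may freely enlarge the witness $N$ to exceed $N_{\dom(\alpha)-1}$ without losing the inequality for $n\geq N$. In the base case $\alpha=\varnothing$ there is no such constraint (and $3\dom(\varnothing)+2=2$ matches the threshold in the definition of $X_{\langle N\rangle}$), so any $N\geq 1$ supplied by the divergence argument suffices. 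Together these two inclusions give the claimed equality.
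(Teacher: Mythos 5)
Your proof is correct and follows the paper's own argument exactly: the inclusion $\supseteq$ is trivial from the definition, and $\subseteq$ follows from Proposition \ref{p2}(b), which gives $t^*_{\sigma^n(\uline s(x))}\to\infty$ for points of $\widetilde E(\mathcal F)$. Your extra care about enlarging $N$ so that $\alpha^\frown N$ remains increasing is a detail the paper leaves implicit, and it is handled correctly since the defining condition only weakens as $N$ grows.
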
 

\begin{proof}The inclusion ($\supset$) is  trivial, and ($\subset$) holds by Proposition \ref{p2}(b).\end{proof}

We will now show that every $X_{\alpha}$ is first category, as witnessed by the extensions $X_{\alpha^\frown N}$. In the proof below, the observation $$t^*_{\sigma^n(\uline s)}=\sup_{k\geq 1}F^{-k}|s_{n+k}|$$ will be helpful.

\begin{ut}\label{t3}$X_{\alpha^\frown N}$ is nowhere dense in $X_\alpha$. \end{ut}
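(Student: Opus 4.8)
The plan is to trap $\overline{X_{\alpha^\frown N}}$ inside a proper closed subset of $X_\alpha$ and then show that subset has empty interior by perturbing a given escaping endpoint far out in its defining sequence. Write $d=\dom(\alpha)$; then $X_{\alpha^\frown N}$ is the subset of $X_\alpha$ on which $t^*_{\sigma^n(\uline s(x))}>3d+2$ for every $n\ge N$, while $x\in X_\alpha$ already forces $t^*_{\sigma^n(\uline s(x))}>3d-1$ for all $n\ge N_{d-1}$. Since $X_{\alpha^\frown N}\subset\{x\in J(\mathcal F):t^*_{\sigma^n(\uline s(x))}>3d+2\}$ for each fixed $n\ge N$, Proposition \ref{p3}(b) yields
\[
\overline{X_{\alpha^\frown N}}\cap X_\alpha\subset C:=\{x\in X_\alpha:t^*_{\sigma^n(\uline s(x))}\ge 3d+1\text{ for all }n\ge N\}.
\]
Because $t^*$ is lower semi-continuous, $C$ is closed in $X_\alpha$, so it is enough to prove that $X_\alpha\setminus C$ — those $x\in X_\alpha$ with $t^*_{\sigma^m(\uline s(x))}<3d+1$ for some $m\ge N$ — is dense.

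Given $x\in X_\alpha$ with $\uline s=\uline s(x)$ and a basic neighborhood (agreement of the first $M$ entries, together with an $\varepsilon$-window in the $T$-coordinate), I keep an initial block $s'_j=s_j$ for $j\le M_2$ (with $M_2\ge M$ to be chosen) and splice a ``dip'' onto the tail. Past the dip position $m=M_2+1$ I let the entries run along $|s'_{m+i}|=\lfloor F^i(3d)\rfloor$ for $i\ge1$ (for $d=0$ one uses a fixed level in $(0,1)$ in place of $3d$). Writing $t^*_{\sigma^n(\uline s)}=\sup_{k\ge1}F^{-k}|s_{n+k}|$ and using $F^{-k}(t-1)>F^{-k}(t)-1$ (Proposition 1), one gets $t^*_{\sigma^m(\uline s')}\in(3d-1,3d]\subset(3d-1,3d+1)$, so $x'\notin C$ at this $m\ge N$, while $t^*_{\sigma^{m+j}(\uline s')}\in(F^j(3d)-1,F^j(3d)]\to\infty$, so the tail keeps escaping.

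The real content is to keep $x'$ inside $X_\alpha$, i.e.\ to preserve $t^*_{\sigma^n(\uline s')}>3i+2$ at every constrained $n$. Each of the finitely many constraints at positions $n\le M_1$ (for a fixed cut $M\le M_1\le M_2$) is witnessed by a single entry of $\uline s$ at a finite index, so taking $M_2$ beyond all these witnesses leaves those constraints intact inside the kept block. For the intermediate positions $M_1<n\le M_2$ I insert one large entry $|s'_{M_2+1}|=V'$: by the recursion $t^*_{\sigma^n(\uline s')}=\max(F^{-1}|s'_{n+1}|,\,F^{-1}(t^*_{\sigma^{n+1}(\uline s')}))$ this entry propagates downward, and $V'>F^{M_2+1-n_*}(3d-1)$ forces $t^*_{\sigma^n(\uline s')}>3d-1$ throughout, where $n_*$ is the least position not already secured by the kept block (note $n_*>N_{d-1}$, so $3d-1$ is exactly the threshold in play). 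Finiteness of $t^*_{\uline s'}$ together with $t^*_{\sigma^n(\uline s')}\to\infty$ then gives $x'\in\widetilde E(\mathcal F)$ by Proposition \ref{p2}(b), and agreement on $[0,M_2]\supset[0,M]$ puts $\uline s'$ in the prescribed clopen subset of $\mathbb Z^\omega$.

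I expect the genuine obstacle to be the last piece of closeness: the endpoint height $T(x')=t_{\uline s'}$ must fall inside the $\varepsilon$-window about $t_{\uline s}$, and Proposition \ref{p2}(a) only locates $t$ within $1$ of $t^*$. I would instead argue with the exact thresholds $\tau_n=\min\{t:T(\mathcal F^n\langle t,\uline s\rangle)\ge0\}$, for which $t_{\uline s}=\sup_n\tau_n$. Agreement on $[0,M_2]$ gives $\tau_n'=\tau_n$ there, whence $t_{\uline s'}\ge\sup_{n\le M_1}\tau_n>t_{\uline s}-\varepsilon$ once $M_1$ is large. The upper bound requires that the orbit of $\langle t_{\uline s}+\varepsilon,\uline s'\rangle$ stay nonnegative after time $M_2$, and the only dangerous step is the subtraction of $V'$. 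The decisive self-consistency is that the hypothesis making $V'$ large — position $n_*$ being witnessed only beyond $M_2$ — forces the orbit height at time $M_2$, which is at least $t^*_{\sigma^{M_2}(\uline s)}>F^{M_2-n_*}(3d-1)$, to be exactly large enough that a single application of $F$ clears $V'$; thereafter the orbit is huge and stays positive. Hence $\tau_n'\le t_{\uline s}+\varepsilon$ for all $n$, so $x'$ lands in the neighborhood and $X_\alpha\setminus C$ is dense.
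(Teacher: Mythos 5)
Your construction diverges from the paper's at exactly one point, and that divergence opens a genuine gap which your final step does not close. You graft an artificial large entry $V'$ onto the tail, which destroys the pointwise inequality $|s'_n|\le|s_n|$; consequently you cannot invoke Proposition \ref{p2}(c) to bound $t_{\uline s'}$ above, and you must instead argue that the orbit of $\langle t_{\uline s}+\varepsilon,\uline s'\rangle$ survives the subtraction of $V'$. Your ``decisive self-consistency'' does not deliver this. What you actually know is that the orbit height $h$ at time $M_2$ satisfies $h>F^{M_2-n_*}(3d-1)$, hence $F(h)>F^{M_2+1-n_*}(3d-1)$; but $V'$ is an \emph{integer} chosen strictly greater than that same quantity $F^{M_2+1-n_*}(3d-1)$. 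Two numbers each exceeding a common threshold need not be ordered: $F(h)$ may exceed the threshold by an arbitrarily small margin, while the least admissible $V'$ can exceed it by nearly $1$, so $F(h)-V'$ can be negative. Worse, even $F(h)-V'>0$ is not enough: for the orbit to remain nonnegative past time $M_2+1$ you need $F(h)-V'\ge t_{\sigma^{M_2+1}(\uline s')}$, and by Proposition \ref{p2}(a) this threshold can be as large as $3d+1$, so a marginal clearance dies on the tail $\lfloor F^i(3d)\rfloor$. The step you yourself flagged as ``the genuine obstacle'' is indeed where the proof breaks.

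The gap is repairable, but it needs an ingredient you never use: expansion of the $\varepsilon$-boost under iteration. Since the orbit heights of a point of $X_\alpha$ are eventually $>3d-1\ge 2$, the gap between the orbits of $\langle t_{\uline s}+\varepsilon,\uline s\rangle$ and $\langle t_{\uline s},\uline s\rangle$ multiplies by at least $e^{2}$ per step, so choosing $M_2$ large makes the gap at time $M_2$ exceed $3d+3$; then $F(h)\ge F(h_0)+3d+3>(V'-1)+(3d+3)$, which clears $V'$ with the room required. The paper sidesteps all of this by never increasing an entry: it caps the tail at $\min\{|s_n|,\lfloor F^{n-m}(3\dom(\alpha))\rfloor\}$ and places the cut $m$ at a \emph{witness} position $n_0+k_{n_0}$, so the kept entry $s_m$ is automatically enormous and secures the intermediate positions --- the role your $V'$ plays --- while monotonicity plus lower semi-continuity of $\uline s\mapsto t_{\uline s}$ give the $T$-coordinate convergence for free. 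One smaller remark: your claim that $C$ is closed because $t^*$ is lower semi-continuous has the semicontinuity backwards (lower semi-continuity makes $\{t^*\le R\}$ closed; $\{t^*\ge R\}$ is only $G_\delta$), but this is harmless, since nowhere density of $X_{\alpha^\frown N}$ only requires that $C$ have empty interior, i.e.\ that $X_\alpha\setminus C$ be dense.
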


\begin{proof}Let $\langle t_{\uline s},\uline s\rangle\in X_{\alpha^\frown N}$.  We will show that there is a sequence of points in $X_\alpha\setminus \overline{X_{\alpha^\frown N}}$ converging to $\langle t_{\uline s},\uline s\rangle$. To that end, for each $m<\omega$ define $\uline s^m$ coordinate-wise by  setting
$$
s^m_n=
\begin{cases}
s_n& \text{ if }n\leq m\\
\min\{|s_n|,\lfloor F^{n-m}(3\dom(\alpha))\rfloor\}& \text{ if }n>m.
\end{cases}
$$
Clearly $\uline s^m\to\uline s$ and $|s^m_n|\leq|s_n|$ for every $n$. So $t_{\uline s^m}\leq t_{\uline s}$ by Proposition \ref{p2}(c). From lower semi-continuity of $\uline s\mapsto t_{\uline s}$ we get   $\langle t_{\uline s^m},\uline s^m\rangle\to \langle  t_{\uline s},\uline s\rangle$.


  %

We will now prove  that a subsequence of $\langle t_{\uline s^m},\uline s^m\rangle$ is contained in $X_\alpha\setminus \overline{X_{\alpha^\frown N}}$. This will be established  by showing: 
\begin{center}\textit{For any  integer $M$ there exists $m\geq M$ such that $\langle t_{\uline s^m},\uline s^m\rangle\in X_\alpha\setminus \overline{X_{\alpha^\frown N}}$.}\end{center}

 For each $i<\dom(\alpha)$ put $N_i=\alpha(i)$,  and let $N_{\dom(\alpha)}=N$.  For  each $n\in [N_i,N_{i+1})$ there exists $k_n\geq 1$ such that $$F^{-k_n}|s_{n+k_n}|>3i+2.$$ 
   Now let $M$ be given; we may assume that $M>N+\max\{k_n:n<N\}$.      For each $n\in [N,M]$ there exists $k_n\geq 1$ such that $$F^{-k_n}|s_{n+k_n}|>3\dom(\alpha)+2.$$ Let $m=\max\{n+k_n:n\in [N,M]\}$. Clearly   $m\geq M+k_M>M$     . 

\begin{ucl}$\langle t_{\uline s^m},\uline s^m\rangle\in X_\alpha$\end{ucl}

\begin{proof}[Proof of Claim 1]  By the choice of $m$,       for every $i\leq \dom(\alpha)$ and $n\in [N_i,m)$ we have guaranteed that $t^*_{\sigma^n(\uline s^m)}>3\dom(\alpha\restriction i)+2.$ Additionally, if $n\geq m$ then $t^*_{\sigma^n(\uline s^m)}>3\dom(\alpha)-1.$  This is trivial if $s^m_{n'}=|s_{n'}|$ for all $n'>n$.  On the other hand, if there exists $n'>n$ such that $s^m_{n'}= \lfloor F^{n'-m}(3\dom(\alpha))\rfloor$ then by Proposition 1 we get    \begin{align*}
t^*_{\sigma^n(\uline s^m)}&\geq F^{-(n'-n)}\lfloor F^{n'-m}(3\dom(\alpha))\rfloor\\
&\geq F^{-(n'-n)}(F^{n'-m}(3\dom(\alpha))-1)\\ 
&> F^{-(n'-n)}(F^{n'-m}(3\dom(\alpha)))-1\\ 
&= F^{n-m}(3\dom(\alpha)))-1\\ 
&\geq  3\dom(\alpha)-1.
\end{align*} This also shows that $t^*_{\sigma^n(\uline s^m)}\to\infty$ in the case  when $$\{n<\omega: s^m_n=\lfloor F^{n-m}(3\dom(\alpha))\rfloor\}$$ is infinite, due to the fourth line in the inequality above. So in this case $\langle t_{\uline s^m},\uline s^m\rangle\in \widetilde E(\mathcal F)$ by Proposition 2(b). In the other case $\uline s^m$ is essentially $\uline s$, which clearly belongs to $\widetilde E(\mathcal F)$.   We thus have $\langle t_{\uline s^m},\uline s^m\rangle\in \widetilde E(\mathcal F)$, and conclude that $\langle t_{\uline s^m},\uline s^m\rangle\in X_\alpha$.      \end{proof}

\begin{ucl}$\langle t_{\uline s^m},\uline s^m\rangle\notin \overline{X_{\alpha^\frown N}}$\end{ucl}
\begin{proof}[Proof of Claim 2]Since $m\geq N$, the hypothesis  $\langle t_{\uline s^m},\uline s^m\rangle\in \overline{X_{\alpha^\frown N}}$ would imply $t^*_{\sigma^m(\uline s^m)}\geq 3\dom(\alpha)+1$ by Proposition \ref{p3}(b). But $$ t^*_{\sigma^m(\uline s^m)} =\sup_{k\geq 1} F^{-k}|s^m_{m+k}|\leq \sup_{k\geq 1} F^{-k}\lfloor F^{k}(3\dom(\alpha))\rfloor\leq 3\dom(\alpha).$$ Therefore $\langle t_{\uline s^m},\uline s^m\rangle\notin \overline{X_{\alpha^\frown N}}$.  \end{proof}

We have shown that each point of $X_{\alpha^\frown N}$ lies in the closure of $X_\alpha\setminus \overline{X_{\alpha^\frown N}}$. Therefore $X_{\alpha^\frown N}$ is nowhere dense in $X_\alpha$. This concludes the proof of Theorem \ref{t3}. \renewcommand{\qedsymbol}{$\blacksquare$}\end{proof}

\begin{uc}\label{c5}Each $X_{\alpha}$ is a first category space.\end{uc}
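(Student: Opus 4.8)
The plan is to observe that this corollary is now an immediate consequence of Observation 2 together with Theorem 3, so the proof is a short bookkeeping step rather than a new argument. Recall from Section 2.3 that to call $X_\alpha$ a first category \emph{space} means exactly that $X_\alpha$ can be written as a countable union of subsets each of which is nowhere dense \emph{in $X_\alpha$ itself}. Both ingredients of such a decomposition are already available, so I would simply assemble them.

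Concretely, Observation 2 provides the decomposition $X_\alpha=\bigcup_{N=1}^\infty X_{\alpha^\frown N}$, which is a countable union of subsets of $X_\alpha$. Theorem 3 asserts that each term $X_{\alpha^\frown N}$ is nowhere dense in $X_\alpha$. Combining the two, $X_\alpha$ is expressed as a countable union of nowhere dense subsets of itself, which is precisely the definition of a first category space. I would state this combination in a single sentence and invoke Observation 2 and Theorem 3 by name.

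The only points that merit a moment's care are formal ones, and neither is a genuine obstacle. First, for those $N$ lying outside the admissible range $N>N_{\dom(\alpha)-1}$ the set $X_{\alpha^\frown N}$ is not literally defined; I would note that such terms may be taken to be empty (and are in any case trivially nowhere dense), so they do not disturb the union. Second, nowhere density must be understood relative to the space $X_\alpha$ rather than to any ambient space, but since Theorem 3 was already proved in that relative sense, no change of ambient space is required. With these observations in place there is nothing further to prove.
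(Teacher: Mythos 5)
Your proposal is correct and matches the paper's proof exactly: the paper's own argument is simply to cite Observation 2 and Theorem 3, which together express $X_\alpha$ as a countable union of subsets nowhere dense in $X_\alpha$. Your extra remarks about undefined terms being taken as empty and about nowhere density being relative to $X_\alpha$ are harmless clarifications of the same argument.
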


\begin{proof}Observation 2 and Theorem \ref{t3}.\end{proof}

\section{Proof of Theorem 1}
We are now ready for the main results.

\begin{ut}\label{t6}$\widetilde E(\mathcal F)$ is nowhere $G_{\delta\sigma}$. \end{ut}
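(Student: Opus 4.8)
The plan is to show directly that no nonempty relatively open set $U\subseteq\widetilde E(\mathcal F)$ is absolutely $G_{\delta\sigma}$. Suppose one were; since absolute $G_{\delta\sigma}$ means $\sigma$-complete, we may write $U=\bigcup_{k<\omega}C_k$ with each $C_k$ completely metrizable. I will run a fusion argument down the tree $(X_\alpha)$ to produce a single point $x\in U$ lying in none of the $C_k$, contradicting the decomposition. Two ambient facts drive the argument: $\widetilde E(\mathcal F)$ sits inside the completely metrizable space $E(\mathcal F)$ (Remark 1), so nested closed sets of vanishing diameter catch a point; and each stratum $X_\alpha$ is first category in itself (Corollary 4).

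First I would isolate the only way the sets $C_k$ enter, namely a category estimate: \emph{for every $\alpha$ and $k$, the set $C_k\cap X_\alpha$ is nowhere dense in $X_\alpha$.} Indeed, $X_\alpha$ is $G_\delta$ in $\widetilde E(\mathcal F)$ (Observation 1), so $C_k\cap X_\alpha$ is a $G_\delta$-subset of the completely metrizable space $C_k$, hence itself completely metrizable. Were it somewhere dense, its closure would contain a nonempty set $V$ open in $X_\alpha$; then $C_k\cap V$ would be a dense completely metrizable subspace of $V$, making $V$ a Baire space, contradicting that $V$, being open in the first category space $X_\alpha$, is first category. Thus $X_\alpha\setminus C_k$ is dense in $X_\alpha$, and every nonempty relatively open subset of $X_\alpha$ contains points outside $C_k$.

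Now fix a complete metric on $E(\mathcal F)$ and, for each $k$, a \emph{regular} $G_\delta$-representation $C_k=\bigcap_m O^k_m$ with $O^k_m$ open in $E(\mathcal F)$ and $\overline{O^k_{m+1}}\subseteq O^k_m$, so that $\bigcap_m\overline{O^k_m}=C_k$; the point of this is that any point off $C_k$ already lies off some $\overline{O^k_m}$, inside an open set whose closure misses $C_k$. Starting from a box $B_0$ in $E(\mathcal F)$ with $\varnothing\neq B_0\cap\widetilde E(\mathcal F)\subseteq U$, I recursively build an increasing branch $\varnothing=\alpha_0\subsetneq\alpha_1\subsetneq\cdots$ and nonempty open sets $B_0\supseteq\overline{B_1}\supseteq B_1\supseteq\cdots$ in $E(\mathcal F)$ with $\diam B_k\to0$, together with points $x_k\in B_k\cap X_{\alpha_k}$. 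Given $x_k$, Observation 2 supplies $N$ larger than the last entry of $\alpha_k$ with $x_k\in X_{\alpha_k^\frown N}=:X_{\alpha_{k+1}}$, so $B_k\cap X_{\alpha_{k+1}}\neq\varnothing$; by the category estimate I choose $x_{k+1}$ in this relatively open set but outside $C_k$, select $m$ with $x_{k+1}\notin\overline{O^k_m}$, and take a small box $B_{k+1}\ni x_{k+1}$ with $\overline{B_{k+1}}\subseteq B_k\setminus\overline{O^k_m}$, whence $\overline{B_{k+1}}\cap C_k=\varnothing$.

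By completeness $\bigcap_k\overline{B_k}=\{x\}$, and $x\notin C_k$ for every $k$ because $x\in\overline{B_{k+1}}$. It remains to place $x$ in $U$, and here Proposition \ref{p3} together with the spacing of the thresholds does the work: although $x$ lies in no finite stratum, for each fixed $j$ and each $n\geq N_j$ all $x_k$ with $k>j$ satisfy $t^*_{\sigma^n(\uline s(x_k))}>3j+2$, so $x$ is in the closure of that open set and Proposition \ref{p3}(a) gives $t_{\sigma^n(\uline s(x))}\geq 3j+2$; letting $j\to\infty$ forces $t_{\sigma^n(\uline s(x))}\to\infty$, so $x\in I(\mathcal F)$, and as $x\in E(\mathcal F)$ we get $x\in\widetilde E(\mathcal F)$. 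Since $\overline{B_1}\subseteq B_0$ with $B_0\cap\widetilde E(\mathcal F)\subseteq U$, in fact $x\in U$, contradicting $x\notin\bigcup_k C_k=U$. The main obstacle, and what makes the construction delicate, is exactly that the branch limit escapes every finite stratum: one must at once guarantee that it falls back into $\widetilde E(\mathcal F)$ (the closure estimate of Proposition \ref{p3}, whose loss of $1$ is absorbed by the jumps of $3$ in the defining thresholds) and that the avoidance of each $C_k$, secured only inside the strata, survives to the limit (the regular $G_\delta$-representations, which turn a membership failure into an open, closure-stable condition).
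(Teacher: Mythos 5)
Your architecture is the paper's: a fusion down the tree $(X_\alpha)$, a Baire-category estimate against the completely metrizable pieces, a complete metric on $E(\mathcal F)$ to extract a limit, and Proposition \ref{p3}(a) to force that limit back into $\widetilde E(\mathcal F)$. Your category estimate and the final placement of $x$ in $\widetilde E(\mathcal F)$ are correct. The gap is the device you rely on to make avoidance of each $C_k$ closure-stable: the ``regular $G_\delta$-representation'' does not exist in general. If $C_k\subseteq O^k_m$ for every $m$, then $\overline{C_k}\subseteq\overline{O^k_m}$ for every $m$, so $\bigcap_m\overline{O^k_m}\supseteq\overline{C_k}$; and since an intersection of closed sets is closed, a representation with $\bigcap_m\overline{O^k_m}=C_k$ exists if and only if $C_k$ is closed in $E(\mathcal F)$. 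The $C_k$ are arbitrary completely metrizable subspaces of $\widetilde E(\mathcal F)$ and need not be closed: since $\widetilde E(\mathcal F)$ is dense in $E(\mathcal F)$ and $E(\mathcal F)\setminus\widetilde E(\mathcal F)\neq\varnothing$, a sequence of points of $\widetilde E(\mathcal F)$ converging to a point of $E(\mathcal F)\setminus\widetilde E(\mathcal F)$ is a discrete (hence completely metrizable) non-closed subspace. For such a $C_k$ the recursion breaks exactly where you invoke the representation: the category estimate only hands you $x_{k+1}\notin C_k$, but $x_{k+1}$ may lie in $\overline{C_k}\setminus C_k$, and then there is no $m$ with $x_{k+1}\notin\overline{O^k_m}$ and, worse, no neighborhood of $x_{k+1}$ whose closure misses $C_k$ at all. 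Nor can you patch this by running the category estimate on $\overline{C_k}$: the trace $\overline{C_k}\cap X_\alpha$ need not be completely metrizable (because $\widetilde E(\mathcal F)$ itself is not a $G_\delta$), so the nowhere-density argument does not apply to it.

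The paper threads this needle differently: at each stage it avoids the closure of the completely metrizable \emph{trace} of $A_n$ on the current stratum (a $G_\delta$-subset of $A_n$ by Observation 1), rather than trying to represent $A_n$ by open sets; the new point is then chosen off that closed set, and a small closed ball does the avoiding. To make this fully closure-stable one intersects with $\overline{X_{\alpha_{k+1}}}$: the set $C_k\cap\overline{X_{\alpha_{k+1}}}$ is closed in $C_k$, hence completely metrizable, while $\overline{X_{\alpha_{k+1}}}\cap\widetilde E(\mathcal F)$ is still first category, since it contains $X_{\alpha_{k+1}}$ as a dense, first-category $G_\delta$-subset; so some point of $B_k\cap X_{\alpha_{k+1}}$ lies outside $\overline{C_k\cap\overline{X_{\alpha_{k+1}}}}$, and one shrinks $B_{k+1}$ so that $\overline{B_{k+1}}$ misses $C_k\cap\overline{X_{\alpha_{k+1}}}$. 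This weaker avoidance suffices because Proposition \ref{p3}(a) places the limit $x$ in $\overline{X_{\alpha_{k+1}}}\cap\widetilde E(\mathcal F)$ for every $k$, whence $x\notin C_k$. You correctly identified this closure-stability issue as the delicate point of the whole proof, but the mechanism you propose for it is false, so as written the proof has a genuine gap; the repair requires the trace idea above, not a regularization of the $C_k$.
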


\begin{proof}Let $d$ be a complete metric for $E(\mathcal F)$.  All closures in the proof will be taken in the space $E(\mathcal F)$, and diameters will be with respect to $d$.


Let $\mathcal A=\{A_n:n<\omega\}$ be a collection of completely metrizable subspaces of $\widetilde E(f)$. Let $W$ be any non-empty  open subset of $E(\mathcal F)$.  We will show that $\mathcal A$ does not cover $W\cap \widetilde E(\mathcal F)$. {  This will prove that $W$ is not $\sigma$-complete, and more generally that $E(\mathcal F)$ is nowhere $G_{\delta\sigma}$.}

{  Recall that any open subset of a first category space is again of first category.  } Hence the intersection $W\cap X_{\varnothing}$ is first category  by Corollary \ref{c5}. This set is also non-empty by Remark 6. Therefore $A_0$ is not dense in $W\cap X_{\varnothing}$. So  $W\cap X_{\varnothing}\setminus \overline {A_0}\neq\varnothing.$ Thus there is  a non-empty  open set $U\subset E(\mathcal F)$ such that $\overline U\subset W$, $\diam(U)<1$, and $\overline{U} \cap A_0=\varnothing$. Let $U_0=U\cap X_{\varnothing}$. 

By Observation 2 there exists $N_0$ such that  $X_{\langle N_0\rangle}\cap U_0\neq\varnothing$. Since $U_0\cap X_{\langle N_0\rangle}$ is first category (Corollary \ref{c5}),  it does not have a dense completely metrizable subspace.  By Observation 1,  $U_0\cap X_{\langle N_0\rangle}\cap A_1$  is a $G_{\delta}$-subset of $A_1$ and is therefore completely metrizable. So  $U_0\cap  X_{\langle N_0\rangle}\setminus \overline{U_0\cap  X_{\langle N_0\rangle}\cap A_1}\neq\varnothing.$ 
Hence there is a non-empty relatively open $U_1\subset U_0\cap  X_{\langle N_0\rangle}$ such that $\diam(U_1)<1/2$ and $\overline{U_1} \cap  A_1=\varnothing$.  Now choose  $N_1>N_0$ such that  $X_{\langle N_0,N_1\rangle}\cap U_1\neq\varnothing$. 

This process can be continued to get an increasing sequence $$\lambda=\langle N_0,N_1,N_2,\ldots\rangle\in \mathbb N ^\omega$$  and non-empty sets $U_0\supset U_1\supset U_2\supset\ldots$ such that $U_n$ is  open in $X_{\lambda\restriction n}$,  $\diam(U_{n})<\frac{1}{n+1}$, and $\overline{U_n}\cap A_n=\varnothing$. By completeness of the metric space $(E(\mathcal F),d)$  there exists $$x\in  \bigcap_{n=0}^\infty \overline{ U_n}$$  Then $x\in  \bigcap_{n=0}^\infty \overline{ X_{\lambda\restriction n}}$, so by   Proposition \ref{p3}(a) $t_{\sigma^n(\uline s(x))}\to\infty$.  We have  $$x\in\overline{U_0}\cap \widetilde E(\mathcal F)\subset W\cap \widetilde E(\mathcal F)$$ and yet $x\notin \bigcup \mathcal A$. Hence $\mathcal A$ does not cover $W\cap  \widetilde E(\mathcal F)$.\end{proof}

Recall that from Section 3 that $f=f_{-1}$, and $\mathcal F\restriction J(\mathcal F)$ is conjugate to $f\restriction J(f)$. 
\begin{uc}\label{c7}$I(f)$ is nowhere $G_{\delta\sigma}$. \end{uc}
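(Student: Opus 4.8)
The goal is to deduce Corollary 6 from Theorem 5, which asserts that $\widetilde E(\mathcal F)$ is nowhere $G_{\delta\sigma}$. The plan is to transport this property along the conjugacy between $\mathcal F\restriction J(\mathcal F)$ and $f\restriction J(f)$, and then to upgrade from the escaping \emph{endpoints} $\widetilde E(\mathcal F)$ to the full escaping set $I(f)$ using density and topological invariance of the absolute Borel classes.

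\medskip

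First I would recall that nowhere $G_{\delta\sigma}$ is a topological property: it is preserved by homeomorphisms, since being (absolutely) $G_{\delta\sigma}$ is an intrinsic property of a separable metrizable space (as noted in Section~2.2). The conjugacy $h:J(\mathcal F)\to J(f)$ from \cite{rem2} is a homeomorphism commuting with the dynamics, so $h(I(\mathcal F))=I(f)$ and $h(\widetilde E(\mathcal F))=h(E(\mathcal F))\cap I(f)$, which is exactly the set of escaping endpoints of $J(f)$. Applying Theorem~5 through $h$, the set $\widetilde E(f):=h(\widetilde E(\mathcal F))$ is therefore nowhere $G_{\delta\sigma}$ as a subspace of $J(f)$.

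\medskip

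Next I would relate $\widetilde E(f)$ to $I(f)$. By Remark~2, $E(\mathcal F)\cap I(\mathcal F)$ is dense in $J(\mathcal F)$, so $\widetilde E(f)$ is dense in $J(f)=\overline{I(f)}$. The crucial point is that if $I(f)$ \emph{were} $G_{\delta\sigma}$ on some neighborhood $V\cap I(f)$, then its intersection with the subspace $\widetilde E(f)$ would again be $G_{\delta\sigma}$ in $\widetilde E(f)$: the $G_{\delta\sigma}$ classes are closed under passing to (relatively closed, indeed arbitrary) subspaces, since an absolute $G_{\delta\sigma}$-set meets any subspace in a relative $G_{\delta\sigma}$-set. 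Because $\widetilde E(f)\subset I(f)$, we have $V\cap\widetilde E(f)=(V\cap I(f))\cap\widetilde E(f)$, which would make a nonempty open subset of $\widetilde E(f)$ absolutely $G_{\delta\sigma}$ — contradicting that $\widetilde E(f)$ is nowhere $G_{\delta\sigma}$. Here I use that $\widetilde E(f)$ is dense, so every nonempty open $V$ in $J(f)$ that meets $I(f)$ also meets $\widetilde E(f)$ in a nonempty relatively open set.

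\medskip

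Thus $I(f)$ admits no $G_{\delta\sigma}$ neighborhood, i.e.\ $I(f)$ is nowhere $G_{\delta\sigma}$. The main obstacle to watch is the subspace-stability step: one must confirm that restricting an absolute $G_{\delta\sigma}$-subspace of $I(f)$ down to the dense subset $\widetilde E(f)$ preserves the $G_{\delta\sigma}$ property relative to $\widetilde E(f)$. This is a standard fact about Borel classes — a set that is $G_{\delta\sigma}$ in a space remains $G_{\delta\sigma}$ when intersected with any subspace — but it is the pivot on which the whole reduction turns, so I would state it explicitly rather than leave it implicit. The remaining details (that $h$ carries escaping points to escaping points and endpoints to endpoints, and that density is inherited through $h$) are immediate from the conjugacy and Remark~2.
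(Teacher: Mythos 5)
There is a genuine gap at exactly the step you flagged as the pivot. The statement ``an absolute $G_{\delta\sigma}$-set meets any subspace in a relative $G_{\delta\sigma}$-set'' is true, but it only yields that $V\cap\widetilde E(f)$ is $G_{\delta\sigma}$ \emph{relative to} $\widetilde E(f)$ --- which is vacuous, since any open subset of any space is trivially a relative $G_{\delta\sigma}$ in that space. The jump from this to ``$V\cap\widetilde E(f)$ is \emph{absolutely} $G_{\delta\sigma}$'' (i.e.\ $\sigma$-complete, which is what contradicting Theorem~5 requires) is invalid: as Section~2.2 of the paper explains, relative Borel classes upgrade to absolute ones only when the ambient space is completely metrizable, and $\widetilde E(f)$ is certainly not (that is the whole point of Theorem~5). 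To see that your argument structure cannot work, run it with $E(\mathcal F)$ in place of $I(f)$: the set $\widetilde E(\mathcal F)$ is dense in $E(\mathcal F)$, and $E(\mathcal F)$ is completely metrizable, hence absolutely $G_{\delta\sigma}$ on every neighborhood; your reasoning would then force some nonempty open subset of $\widetilde E(\mathcal F)$ to be absolutely $G_{\delta\sigma}$, contradicting Theorem~5 --- yet no contradiction exists in reality, because density alone transfers nothing.

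The missing ingredient, and the one fact your proof never invokes, is that $\widetilde E(\mathcal F)$ is a \emph{$G_\delta$-subset} of $I(\mathcal F)$ (Remark~1 gives $E(\mathcal F)$ as a $G_\delta$ in $J(\mathcal F)$, hence $\widetilde E(\mathcal F)=E(\mathcal F)\cap I(\mathcal F)$ is $G_\delta$ in $I(\mathcal F)$). This is what the paper's proof uses: if some nonempty open $V\cap I(f)$ were $\sigma$-complete, write $V\cap I(\mathcal F)=\bigcup_n C_n$ with each $C_n$ completely metrizable; then $V\cap\widetilde E(\mathcal F)\cap C_n$ is a $G_\delta$-subset of $C_n$, hence completely metrizable, so $V\cap\widetilde E(\mathcal F)=\bigcup_n\bigl(V\cap\widetilde E(\mathcal F)\cap C_n\bigr)$ is $\sigma$-complete --- i.e.\ absolutely $G_{\delta\sigma}$ --- and by density it is a nonempty open subset of $\widetilde E(\mathcal F)$, contradicting Theorem~5. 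In short: absolute $G_{\delta\sigma}$-ness passes down to $G_\delta$-subspaces, not to arbitrary (or merely dense) subspaces, and your proof needs to be repaired by inserting the $G_\delta$ relation between $\widetilde E(\mathcal F)$ and $I(\mathcal F)$ at the pivot. Your conjugacy and density steps are fine and match the paper's intent.
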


\begin{proof}Since $I(f)$ is homeomorphic to $I(\mathcal F)$, it suffices to show that $I(\mathcal F)$ is nowhere a $G_{\delta\sigma}$-subset of $[0,\infty)\times \mathbb Z ^\omega$. This follows from Theorem \ref{t6}   and the fact that  $\widetilde E(\mathcal F)$ is a dense $G_{\delta}$-subset of $I(\mathcal F)$. \end{proof}

We can now  prove Theorem 1 by combining  Corollary 6 with the following.

\begin{up}[cf.\ {\cite[Theorem 1.1]{rem2}}]For every $a\in \mathbb C$ there exists $R>0$ and a homeomorphism $\varphi:\mathbb C\to \mathbb C$ such that $\varphi[A\cap I(f)]=\varphi[A]\cap I(f_a)$, where $$A=\{z\in \mathbb C:|f^n(z)|\geq R \text{ for all }n\geq 1\}.$$\end{up}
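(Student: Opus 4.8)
The plan is to produce $\varphi$ as the ambient (whole-plane) extension of a symbolic conjugacy between the two escaping sets inside matching absorbing regions. I would first fix $R>0$ so large that $\{\operatorname{Re} z\ge R\}$ is forward absorbing for both $f$ and $f_a$ and both maps are strongly expanding there. The decisive observation is that $f(z)-f_a(z)=-1-a$ is \emph{constant}: for $\operatorname{Re} z\ge R$ this bounded translation is negligible against $e^z$, so $f$ and $f_a$ induce the \emph{same} symbolic structure on $A$. Concretely, each $z\in A$ has a well-defined external address $\underline s(z)=s_0s_1s_2\ldots\in\mathbb Z^\omega$ recording, for each $n$, the horizontal strip $\{2\pi s_n-\pi\le\operatorname{Im} f^n(z)<2\pi s_n+\pi\}$ that the orbit visits, and once $R$ is large the admissible addresses coincide for the two maps. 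In particular the model $(J(\mathcal F),\mathcal F)$ of Section 3, restricted over $A$, simultaneously describes the combinatorics of $f$ and of $f_a$; note that no assumption on $a$ is needed here, since $A$ lies far from the singular value $a$ and its orbit.

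With this common coordinate in hand, I would define a preliminary map $h$ on escaping points by matching both the external address and the escape-rate (``potential'') coordinate $T(\mathcal F^n(\cdot))\to\infty$. Each escaping point lies on a unique hair indexed by its address $\underline s$ and is parametrized along that hair by its potential, so the address-and-potential data identify $A\cap I(f)$ and $A'\cap I(f_a)$ with one and the same subset of the brush $I(\mathcal F)$, for a suitable absorbing region $A'$ of $f_a$. Because the constant perturbation $-1-a$ distorts potentials only by a bounded, controllable amount (analogues of the estimates in Propositions \ref{p2} and \ref{p3}), $h$ is a well-defined homeomorphism $A\cap I(f)\to A'\cap I(f_a)$ which, crucially, respects the distinction between escaping and non-escaping because it respects the escape rate. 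This is the structurally straightforward half of the argument.

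The step I expect to be the main obstacle is promoting $h$ to a homeomorphism $\varphi$ of all of $\mathbb C$ with $\varphi[A]=A'$, and hence $\varphi[A\cap I(f)]=\varphi[A]\cap I(f_a)$. The difficulty is not the combinatorial matching but the ambient extension: the sets $A\cap I(f)$ and $A'\cap I(f_a)$ (together with their endpoints) are Cantor bouquets --- dense families of disjoint hairs accumulating everywhere on themselves --- and a homeomorphism between two such sets need not extend to the plane. Here I would invoke the rigidity theory of straight brushes and Cantor bouquets of Aarts and Oversteegen: a correspondence between two embedded brushes that respects hairs, endpoints, and the transverse (address) ordering extends to an ambient homeomorphism of $\mathbb C$. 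Since $h$ preserves hairs, endpoints, potentials, and the address order by construction, this theory applies and yields $\varphi$ on a neighborhood of the brush, with $\varphi[A]=A'$. Outside the absorbing region the escaping-set condition is vacuous, so I would finish by extending $\varphi$ across the boundary $\partial A$ and over $\mathbb C\setminus A$ by any homeomorphism agreeing with it on a collar of $\partial A$; glueing these together gives the required global homeomorphism.
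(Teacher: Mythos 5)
This proposition is not proved in the paper at all: it is quoted verbatim from Rempe \cite[Theorem 1.1]{rem2}, so the relevant comparison is with Rempe's proof. Rempe does not construct $\varphi$ by matching symbolic coordinates; he builds it on the absorbing set as a uniform limit of iterated inverse-branch pullbacks (schematically $\vartheta_{k+1}=f_a^{-1}\circ\vartheta_k\circ f$, using the branch that stays in the far right half-plane), with the strong expansion of the exponential there making the sequence uniformly Cauchy. The limit is a conjugacy on the absorbing set with \emph{uniformly bounded displacement}, and both the equality $\varphi[A\cap I(f)]=\varphi[A]\cap I(f_a)$ and the eventual extension to $\mathbb C$ flow from that quantitative control: a map moving points a bounded distance manifestly preserves $\operatorname{Re} f^n(z)\to\infty$, and the plane extension is carried out by hand from the bounded displacement and the structure of the complement. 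Your first two steps (common external addresses for large $R$, matching address-plus-potential data) are a plausible alternative in spirit, though even there you would need to prove continuity of the matching \emph{across} hairs and to define the potential coordinate on the non-escaping points of $A$ as well, since the final $\varphi$ must carry all of $A$ onto $A'$, not just $A\cap I(f)$; this amounts to redoing the Schleicher--Zimmer/Rempe estimates rather than citing them.

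The genuine gap is the extension step. The Aarts--Oversteegen theory gives topological uniqueness of the straight brush and the existence of \emph{some} ambient equivalence between two straight brushes; it contains no theorem that a \emph{given} hair-, endpoint-, and order-preserving homeomorphism between two embedded brushes extends to a homeomorphism of $\mathbb C$, and such a statement is far from obvious (a brush homeomorphism can distort the parametrization along hairs arbitrarily). Nor can you substitute an abstract ambient equivalence for your specific $h$: the entire content of the proposition is that $\varphi$ respects the escaping/non-escaping partition \emph{within} each hair, a property carried only by your particular $h$ and invisible to the brush structure --- a generic ambient equivalence of brushes scrambles escape rates along hairs. So at the decisive moment the proposal appeals to an off-the-shelf rigidity theorem that does not exist; this is precisely what Rempe's bounded-displacement construction is engineered to supply. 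A further symptom of the same problem: for large $R$ the set $A$ is a closed, nowhere dense subset of $J(f)$ with $\partial A=A$, so there is no ``collar of $\partial A$'' and no region on which the escaping-set condition is ``vacuous''; extending a homeomorphism given only on a closed nowhere dense planar set is itself the hard ambient problem, not a finishing touch.
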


Let $a\in \mathbb C$ and let $R>0$ and $\varphi$ be given by Proposition 4.  Note that $$I(f)=\bigcup_{n=0}^\infty f^{-n}[A\cap I(f)].$$ Since $G_{\delta\sigma}$-sets are preserved by continuous pre-images and countable unions, by Corollary \ref{c7}   $A\cap I(f)$ is not $G_{\delta\sigma}$.  So $\varphi[A\cap I(f)]$ is not $G_{\delta\sigma}$.  This is a closed subset of $I(f_a)$ because it is equal to $\varphi[A]\cap I(f_a)$. Therefore  $I(f_a)$ cannot be  $G_{\delta\sigma}$. 
This concludes the proof of Theorem \ref{t1}.\hfill $\blacksquare$

\section{Proof of Theorem 3}

A space $X$ is \textit{zero-dimensional} if the topology of $X$ has a basis consisting of clopen subsets of $X$.  We  can now state van Engelen's  theorem.

\begin{up}[{\cite[Theorem A.2.6]{vee}}]Up to homeomorphism, there is only one zero-dimensional Baire space that is $G_{\delta\sigma}$ and nowhere $F_{\sigma\delta}$.  \end{up}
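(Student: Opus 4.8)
The plan is to establish this as a uniqueness-via-back-and-forth statement, with the theory of \emph{absorbing sets} for Borel Wadge classes supplying the engine that makes the back-and-forth converge. Write $\mathcal C$ for the class of spaces that are zero-dimensional, Baire, $G_{\delta\sigma}$, and nowhere $F_{\sigma\delta}$. My first step is to record that $\mathcal C$ is closed under passage to nonempty clopen subspaces: zero-dimensionality is hereditary; a clopen subset $V$ of a $G_{\delta\sigma}$ space $X=\bigcup_n P_n$ (each $P_n$ completely metrizable) is again $G_{\delta\sigma}$, since $V=\bigcup_n(V\cap P_n)$ and each $V\cap P_n$ is open in the completely metrizable $P_n$, hence completely metrizable; the Baire property survives because a dense completely metrizable subspace meets any nonempty clopen set in a dense completely metrizable subspace of it; and ``nowhere $F_{\sigma\delta}$'' is by definition inherited by open subspaces. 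This self-similarity is exactly what a back-and-forth argument needs, since at every stage the clopen pieces we cut out still lie in $\mathcal C$.

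Given $X,Y\in\mathcal C$, the naive scheme builds matched clopen partitions $\mathcal U_n$ of $X$ and $\mathcal V_n$ of $Y$, refining and shrinking diameters below $1/n$, and defines $h(x)$ as the unique point of $\bigcap_n V_n(x)$. The immediate difficulty---and this is where the argument genuinely departs from the classical Polish characterizations, such as that of $\omega^\omega$---is that $X$ and $Y$ are only $\sigma$-complete, not complete, so a nested sequence of clopen pieces with vanishing diameter can have empty intersection, and the limit map need not be defined. The resolution is to reorganize the construction around absorbing sets: I would show the four hypotheses are together equivalent to $X$ being a $G_{\delta\sigma}$-absorbing set of Baire type, and then invoke the general uniqueness theorem for absorbing sets, whose proof packages precisely the completeness-free back-and-forth we need. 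Here the relevant Wadge class is $\Gamma=$ the class of absolute $G_{\delta\sigma}$ sets, with dual $\check\Gamma=F_{\sigma\delta}$; the role of ``nowhere $F_{\sigma\delta}$'' is to force $X$ to lie \emph{properly} in $\Gamma$ at every point, and the role of ``Baire'' is to select the non-meager absorbing set, the dual meager choice being the classical $F_{\sigma\delta}$-absorbing space $\mathbb Q^\omega$ alluded to in the introduction.

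Carrying this out requires three ingredients. First, $X\in\Gamma$, which is the hypothesis that $X$ is $G_{\delta\sigma}$. Second, $\Gamma$-universality: every $G_{\delta\sigma}$ subset of $2^\omega$ embeds as a closed subset of $X$. I would produce such embeddings by a tree construction inside $X$, using zero-dimensionality to realize the branching by clopen partitions, using the dense completely metrizable skeleton (the Baire hypothesis) to host the $G_\delta$ ``leaves,'' and using ``nowhere $F_{\sigma\delta}$'' to guarantee that the genuinely $G_{\delta\sigma}$ combinatorics can be realized inside every clopen piece. Third, the strong (capturing) form: any closed embedding prescribed on a clopen subset can be extended to a global closed embedding with controlled image, which follows from the clopen flexibility of zero-dimensional spaces together with $\sigma$-completeness. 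These three facts say that $X$ is $\Gamma$-absorbing, and the uniqueness theorem for $\Gamma$-absorbing sets then yields $X\cong Y$.

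The main obstacle is the second and third ingredients: extracting strong $\Gamma$-universality from the intrinsic hypotheses, and keeping the Baire-category bookkeeping consistent on both sides at once. Concretely, one must match the dense completely metrizable core of $X$ with that of $Y$---where ordinary completeness is available---while simultaneously matching the meager, properly-$G_{\delta\sigma}$ remainders, and it is in this simultaneous matching that the failure of global completeness is absorbed by the capturing property rather than by a naive nested intersection. Verifying that ``Baire and nowhere $F_{\sigma\delta}$'' is exactly the right strength to yield absorption---neither too weak to embed all of $\Gamma$, nor so strong as to collapse $X$ into the lower, $F_{\sigma\delta}$ class---is the delicate point on which the whole characterization turns.
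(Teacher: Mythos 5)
You should first note that the paper contains no proof of this statement at all: it is Proposition 5, quoted verbatim with the citation \cite[Theorem A.2.6]{vee} and used as a black box to deduce Theorem 2. So your attempt can only be compared with van Engelen's own argument, not with anything internal to the paper. Measured against that, your architecture is the historically correct one: the closure of the class under nonempty clopen subspaces, the recognition that a naive back-and-forth breaks down because nested clopen sets of vanishing diameter need not have a common point in a merely $\sigma$-complete space, and the decision to route everything through an absorber-style notion with an approximation/capturing property are all genuine features of the known proof (van Engelen's Knaster--Reichbach machinery, later recast as zero-dimensional absorbing-set theory).

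Nevertheless, as a proof the proposal has a genuine gap: every step where the difficulty actually lives is delegated to a statement you do not prove. You invoke (i) a ``general uniqueness theorem for absorbing sets'' and (ii) the claim that zero-dimensional $+$ Baire $+$ $G_{\delta\sigma}$ $+$ nowhere $F_{\sigma\delta}$ is equivalent to being a $G_{\delta\sigma}$-absorbing set. Item (i) is not an off-the-shelf result independent of the theorem in question: for Wadge classes of zero-dimensional spaces, that uniqueness theory \emph{is} van Engelen's framework, so citing it is very nearly citing the proposition itself. Item (ii) is the real content: the hard direction is to manufacture, inside \emph{every} nonempty clopen subset of an \emph{arbitrary} space satisfying the four intrinsic hypotheses, closed embeddings of all $G_{\delta\sigma}$ subsets of $2^\omega$ with the approximation and capturing control that the back-and-forth requires (strong universality). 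Your sketch of this step---a tree construction using the dense completely metrizable skeleton to host the $G_\delta$ pieces and ``nowhere $F_{\sigma\delta}$'' to realize the combinatorics---names the hypotheses but supplies no construction, and this is exactly the step that cannot be done softly: the hypotheses are local, category-theoretic statements, and converting ``nowhere $F_{\sigma\delta}$'' into an embedding statement is where van Engelen needs his classification apparatus. As written, the argument reduces a uniqueness theorem to an unproven uniqueness theorem plus an unproven characterization; it is an accurate outline of the known proof's architecture rather than a proof.
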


The standard representation of the space in Proposition 5 is $\mathbb R\setminus X$, where $X$ is a densely embedded copy of $\mathbb Q ^\omega$ (the infinite product of the rationals) in $\mathbb R$. It can be expressed more concretely as $\mathbb P^\omega\setminus (\mathbb Q+\pi)^\omega$, where $\mathbb P$ denotes the space of irrationals.

\begin{ut}If $f_a$ has an attracting or parabolic cycle, then $J(f_a)\setminus I(f_a)$ is a zero-dimensional Baire space that is $G_{\delta\sigma}$ and nowhere $F_{\sigma\delta}$.\end{ut}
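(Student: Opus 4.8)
The plan is to verify the four properties one at a time, since van Engelen's characterization (Proposition 5) requires $J(f_a)\setminus I(f_a)$ to be (i) zero-dimensional, (ii) Baire, (iii) $G_{\delta\sigma}$, and (iv) nowhere $F_{\sigma\delta}$. Properties (i) and (ii) are essentially imported from the literature: the paper already signals that zero-dimensionality and Baire category for $J(f_a)\setminus I(f_a)$ are known from \cite{bak,lip1}. So I would dispatch these two quickly by citation. Property (iii), that $J(f_a)\setminus I(f_a)$ is $G_{\delta\sigma}$, should follow from general descriptive set theory: $J(f_a)=\overline{I(f_a)}$ is closed hence completely metrizable, $I(f_a)$ is $F_{\sigma\delta}$ in the plane (Rempe), and I expect $J(f_a)\setminus I(f_a)$ to be the complement of an $F_{\sigma\delta}$ set inside a Polish space, which is $G_{\delta\sigma}$. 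I would record this as the formal dual.

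The substantive work is property (iv), that $J(f_a)\setminus I(f_a)$ is \emph{nowhere} $F_{\sigma\delta}$, and this is where the machinery built for Theorem 1 must be redeployed \emph{dually}. The strategy is to transfer the statement ``$I(f_a)$ is nowhere $G_{\delta\sigma}$'' (Corollary 6, pushed through the localization Proposition 4 as in the proof of Theorem 1) into a statement about the complementary non-escaping set by taking Borel duals. Concretely, if a neighborhood $V$ of a point in $J(f_a)\setminus I(f_a)$ were $F_{\sigma\delta}$, then within the completely metrizable ambient space $J(f_a)$ the relatively open set $V$ would have its complement behave as $G_{\delta\sigma}$, and intersecting with $I(f_a)$ should manufacture a $G_{\delta\sigma}$ neighborhood inside $I(f_a)$, contradicting Corollary 6. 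The key point to get right is that $I(f_a)$ is dense in $J(f_a)$ (the model Remark tells us $I(\mathcal F)$ is dense in $J(\mathcal F)$, and this transfers under the conjugacy), so every nonempty open piece of $J(f_a)$ genuinely meets $I(f_a)$ and the nowhere-$G_{\delta\sigma}$ conclusion is available locally everywhere.

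The main obstacle I anticipate is making the duality between ``nowhere $G_{\delta\sigma}$'' for $I(f_a)$ and ``nowhere $F_{\sigma\delta}$'' for $J(f_a)\setminus I(f_a)$ fully rigorous rather than merely suggestive. The two sets partition the Polish space $J(f_a)$, and the naive hope is that one is $G_{\delta\sigma}$ on an open set exactly when the other is $F_{\sigma\delta}$ there; but the complement of a $G_{\delta\sigma}$ subset of a Polish space is $F_{\sigma\delta}$ \emph{in the ambient space}, and I must be careful that the absolute (intrinsic) Borel class, not merely the relative one, is what transfers. I would handle this by working inside the locally compact, completely metrizable model via the conjugacy and the localization $A\cap I(f)$ from Proposition 4, where closed/open complementation is clean, and then invoke the absoluteness principles recorded in Section 2.2 to conclude that a hypothetical $F_{\sigma\delta}$ neighborhood of a non-escaping point forces an absolutely $G_{\delta\sigma}$ neighborhood of an escaping point. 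Once the four hypotheses are in place, the theorem statement is immediate, and Theorem 2 follows by applying van Engelen's uniqueness (Proposition 5) to both $J(f_a)\setminus I(f_a)$ and $J(f_b)\setminus I(f_b)$.
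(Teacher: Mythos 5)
Your overall architecture is the same as the paper's: verify the four hypotheses of van Engelen's theorem (Proposition 5), import zero-dimensionality and Baireness from the literature (the paper cites \cite[Corollary 10]{lip} and \cite[Lemma 1]{bak}, respectively), get $G_{\delta\sigma}$ by complementing the $F_{\sigma\delta}$ set $I(f_a)$ inside the Polish space $J(f_a)$, and obtain ``nowhere $F_{\sigma\delta}$'' by dualizing a nowhere-$G_{\delta\sigma}$ statement about $I(f_a)$, using density of $I(f_a)$ in $J(f_a)$. Your worry about relative versus absolute Borel classes in the duality step is also easily resolved, exactly as you suggest: for open $V\subset\mathbb C$, the set $V\cap J(f_a)$ is completely metrizable, so an absolutely $F_{\sigma\delta}$ neighborhood $V\cap(J(f_a)\setminus I(f_a))$ forces $V\cap I(f_a)$ to be $G_{\delta\sigma}$ in $V\cap J(f_a)$ and hence absolutely $G_{\delta\sigma}$, and it is nonempty by density.

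The genuine gap is in the input to that duality: you need that $I(f_a)$ is \emph{nowhere} $G_{\delta\sigma}$, and you propose to get this from Corollary 6 ``pushed through the localization Proposition 4 as in the proof of Theorem 1.'' That transfer cannot deliver the nowhere property. Proposition 4 (Rempe's Theorem 1.1, valid for every $a$) only exhibits one homeomorphic copy of $A\cap I(f)$ sitting as the closed subset $\varphi[A]\cap I(f_a)$ of $I(f_a)$; as in the proof of Theorem 1, this rules out $I(f_a)$ being $G_{\delta\sigma}$ globally, but it says nothing about a small relatively open set $V\cap I(f_a)$: such a neighborhood may miss the closed set $\varphi[A]$ entirely, and even when it meets $\varphi[A]$ you would need that nonempty relatively open subsets of $A\cap I(f)$ are never $G_{\delta\sigma}$ --- a local statement the paper never establishes (the union trick $I(f)=\bigcup_n f^{-n}[A\cap I(f)]$ used for Theorem 1 is intrinsically global). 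What the paper actually invokes here is Rempe's \emph{other} result, \cite[Theorem 1.2]{rem2}: for attracting or parabolic parameters there is a conjugacy, hence a homeomorphism, between $I(f)$ and $I(f_a)$, and the topological property ``nowhere $G_{\delta\sigma}$'' of Corollary 6 then transfers verbatim. This is also precisely where the attracting/parabolic hypothesis enters this part of the proof; a telltale sign of the gap is that your argument for nowhere $F_{\sigma\delta}$ never uses that hypothesis, so it would ``prove'' the conclusion for every $a\in\mathbb C$, which is beyond what the paper's methods establish.
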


\begin{proof}Clearly $J(f_a)\setminus I(f_a)$ is $G_{\delta\sigma}$ because its Julia complement $I(f_a)$ is $F_{\sigma\delta}$. It is zero-dimensional by \cite[Corollary 10]{lip1}, and Baire because it contains a dense $G_{\delta}$-set in the form of all points  whose orbits are dense in the Julia set \cite{bak,lip}.   Finally,  since $I(f_a)$ is dense in $J(f_a)$, an $F_{\sigma\delta}$-neighborhood in $J(f_a)\setminus I(f_a)$ would complement a $G_{\delta\sigma}$-neighborhood in $I(f_a)$. But $I(f_a)$ is nowhere $G_{\delta\sigma}$ by Corollary 7 and the equivalence  $I(f_a)\simeq I(f)$  (\cite[Theorem 1.2]{rem2} and \cite[Example 11.1]{alh}). Therefore $J(f_a)\setminus I(f_a)$ is nowhere $F_{\sigma\delta}$.\end{proof}

In light of Proposition 5, Theorem 8 implies Theorem 3.  \hfill $\blacksquare$

\end{document}